\numberwithin{equation}{section}
\newtheorem{em-deff}{Definition}[section]
\newtheorem{lemma}[em-deff]{Lemma}
\newtheorem{theorem}[em-deff]{Theorem}
\newtheorem{corollary}[em-deff]{Corollary}
\newtheorem{proposition}[em-deff]{Proposition}
\newtheorem{em-fact}[em-deff]{Fact}
\newtheorem{em-example}[em-deff]{Example}
\newtheorem{claim}[em-deff]{Claim}
\newtheorem{problem}[em-deff]{Problem}
\newtheorem{em-remark}[em-deff]{Remark}
\newenvironment{example}{\begin{em-example} \em }{ \end{em-example}}
\newenvironment{remark}{\begin{em-remark} \em }{ \end{em-remark}}
\newenvironment{deff}{\begin{em-deff} \em }{ \end{em-deff}}
\newcommand{\N}{\mathbb N}
\newcommand{\Z}{\mathbb Z}
\def\ent{\mathrm{ent}}
\def\supp{\mathrm{supp}}
\def\Per{\mathrm{Per}}
\def\Aut{\mathrm{Aut}}
\title{Algebraic entropy of shift endomorphisms on abelian groups}
\author[M. Akhavin]{Maryam Akhavin}
\address[Maryam Akhavin]{Faculty of Mathematical Sciences, University for Teacher Education \\
599 Taleghani Ave., Tehran 15614, Iran}
\email{m$\_$akhavin@tmu.ac.ir}
\author[F. Ayatollah Zadeh Shirazi]{Fatemah Ayatollah Zadeh Shirazi}
\address[Fatemah Ayatalloh Zadeh Shirazi]{Faculty of Mathematics, Statistics and Computer Science, College of Science, University of Tehran\\
Enghelab Ave., Tehran, Iran}
\email{fatemah@khayam.ut.ac.ir}
\author[D. Dikranjan]{Dikran Dikranjan}
\address[Dikran Dikranjan]{Universit\`a di Udine, Dipartimento di Matematica e Informatica
\\ via delle Scienze, 206 - 33100 Udine, Italy}
\email{dikran.dikranjan@dimi.uniud.it}
\author[A. Giordano Bruno]{Anna Giordano Bruno}
\address[Anna Giordano Bruno]{Universit\`a di Udine, Dipartimento di Matematica e Informatica
\\ via delle Scienze, 206 - 33100 Udine, Italy}
\email{anna.giordanobruno@dimi.uniud.it}
\author[A. Hosseini]{Arezoo Hosseini}
\address[Arezoo Hosseini]{Department of Mathematics, Faculty of Science, University of Guilan \\
Manzarieh Ave., Rasht, Iran}
\email{a$\_$hosseini@guilan.ac.ir}
\dedicatory{Dedicated to Prof. A. Chademan, with best wishes for him}
\keywords{algebraic entropy, abelian group, generalized shift, shift, trajectory.}
\subjclass{37A35, 20K01, 20K10, 20K30.}
\begin{document}

\begin{abstract}
For every finite-to-one map $\lambda:\Gamma\to\Gamma$ and for every abelian group $K$, the {\em generalized shift} $\sigma_\lambda$ of the direct sum $\bigoplus_\Gamma K$ is the endomorphism defined by $(x_i)_{i\in\Gamma}\mapsto(x_{\lambda(i)})_{i\in\Gamma}$ \cite{AKH}. In this paper we analyze and compute the algebraic entropy of a generalized shift, which turns out to depend on the cardinality of $K$, but mainly on the function $\lambda$. We give many examples showing that the generalized shifts provide a very useful universal tool for producing counter-examples.
\end{abstract}

\maketitle

We denote by $\mathbb Z$, $\mathbb P$, and $\mathbb N$ respectively the set of integers, the set of primes, and the set of natural numbers; moreover $\mathbb N_0=\N\cup\{0\}$.
For a set $\Gamma$, $\mathcal P_{\rm fin}(\Gamma)$ denotes the family of all finite subsets of $\Gamma$.
For a set $\Lambda$ and an abelian group $G$ we denote by $G^\Lambda$ the direct product $\prod_{i\in\Lambda}G_i$, and by $G^{(\Lambda)}$ the direct sum $\bigoplus_{i\in\Lambda} G_i$, where all $G_i=G$. 
For a set $X$, $n\in\N$, and a function $f:X\to X$ let $\Per(f)$ be the set of all periodic points and $\Per_n(f)$ the set of all periodic points of order at most $n$ of $f$ in $X$.

\section{Introduction}
The measure entropy was introduced by Kolmogorov and Sinai in ergodic theory in the mid fifties of the last century. 
Some ten years later  Adler, Konheim, and McAndrew \cite{AKM} introduced the notion of topological entropy $h_{top}(T)$ of a continuous self-map $T: X \to X$ of a compact topological space $X$. A prominent example in both cases is provided by the \emph{Bernoulli shifts}. Since these shifts are the core of this paper, we introduce them here in full detail.

\begin{example}
Let $K$ be a non-trivial finite group with neutral element $e_K$. 
\begin{itemize}
\item[(a)]  The \emph{two-sided Bernoulli shift} $\overline{\beta}_K$ of the group $K^{\Z}$ is  defined by 
$$\overline\beta_K(\ldots, x_0,x_1,x_2, \ldots)=(\ldots, x_{-1}, x_0, x_1, \ldots),\ (\mbox{i.e.},\  \overline\beta_K((x_n)_{n\in\Z})=(x_{n-1})_{n\in\Z}, \mbox{ for } (x_n)_{n\in\Z}\in K^{\Z}).$$
\item[(b)] The \emph{right  Bernoulli shift} $\beta_K$ \ (respectively, \emph{left Bernoulli shift} $_K\beta$) of the group $K^{\N_0}$ is  defined by 
$$
\beta_K(x_1,x_2,x_3,\ldots)=(e_K,x_1,x_2,\ldots), \; (\mbox{respectively}, \;\; _K\beta(x_0,x_1,x_2,\ldots)=(x_1,x_2,x_3,\ldots).
$$ 
\end{itemize}

The standard product measure of the compact group $K^{\Z}$  (respectively, $K^{\N_0}$) coincides with its Haar measure and $\overline{\beta}_K$ (respectively, $_K\beta$)  is a measure-preserving continuous automorphism (respectively, endomorphism) with topological entropy $\log |K|$ coinciding with the measure entropy. This explains their relevance to both ergodic theory and topological dynamics. 

The right Bernoulli shift $\beta_K$ of $K^{\N_0}$  is less relevant in this respect for two reasons:  it is {\em not} measure-preserving (so not relevant for ergodic theory) and its topological entropy is 0. 
\end{example} 

A possible definition of algebraic entropy  for endomorphisms of abelian groups was briefly mentioned in \cite{AKM}. 
Later on, in 1975 in \cite{W} Weiss defined the algebraic entropy as follows: let $G$ be an abelian group and $F$ be a finite subgroup of $G$; for an endomorphism $\phi:G\rightarrow G$ and $n\in\N$, let $T_n(\phi,F):=F+\phi(F)+\ldots+\phi^{n-1}(F)$ be the \emph{$n$-trajectory} of $F$ with respect to $\phi$. The \emph{algebraic entropy of $\phi$ with respect to $F$} is $$H(\phi,F):={\lim_{n\to +\infty}\frac{\log|T_n(\phi,F)|}{n}},$$ and the \emph{algebraic entropy} of $\phi:G\to G$ is $$\ent(\phi)=\sup\{H(\phi,F): F\ \text{is a finite subgroup of } G\}.$$

Since the definition is based on finite subgroups $F$, and in particular $F$ is contained in the torsion part $t(G)$ of $G$, the algebraic entropy depends only on the restriction of $\phi$ on $t(G)$, that is $\ent(\phi)=\ent(\phi\restriction_{t(G)})$.
The basic properties of the algebraic entropy can be found in \cite{DGSZ,W}. The most relevant of them, known also as 
Addition Theorem, can be found in \S \ref{background} (Theorem \ref{AT}), which collects all relevant properties of the algebraic entropy used in this paper. 

As far as the algebraic entropy is concerned,  the right Bernoulli shift restricted to the direct sum $K^{(\N_0)}$ turned out to be more relevant (while the restriction of the left Bernoulli shift $_K\beta\restriction_{K^{(\N_0)}}$ has algebraic entropy $0$, see Example \ref{NewExample}). More precisely, for a non-trivial finite abelian group $K$ the restriction $\beta_K\restriction_{K^{(\N_0)}}$ has entropy  $\log|K|$ \cite[Example 1.9]{DGSZ} and 
one can show that every function $f$ defined on all endomorphisms of torsion abelian groups with values in the extended non-negative reals and satisfying $f(\beta_{\Z(p)}\restriction_{\Z(p)^{(\N_0)}})=\log |p|$, the Addition Theorem and 
a few other natural properties (namely, Lemmas \ref{conjugation_by_iso}, \ref{log_law} and Remark \ref{lemma1} (b)) must necessarily coincide with the algebraic entropy $\ent(-)$ \cite[Theorem 6.1]{DGSZ}.

With the aim of computing the entropy of other endomorphisms of abelian groups, in this paper we consider a modification of the generalized shifts, introduced in \cite{AKH}. 

\begin{deff}\cite{AKH}
For a non-empty set $\Gamma$, an arbitrary map $\lambda:\Gamma\rightarrow\Gamma$  and an abelian group  $K$ the \emph{generalized shift} $\sigma_{\lambda,K}: K^\Gamma\to K^\Gamma$ is defined by $(x_i)_{i\in\Gamma}\mapsto(x_{\lambda(i)})_{i\in\Gamma}$ for every $(x_i)_{i\in\Gamma}\in K^\Gamma$.
\end{deff}

When there is no possibility of confusion we write $\sigma_\lambda$ instead of $\sigma_{\lambda,K}$.
In case $|K|>1$, the subgroup $K^{(\Gamma)}$ of $K^\Gamma$ is $\sigma_{\lambda,K}$-invariant if and only if $\lambda$ has finite fibers (see Lemma \ref{lemma2}), and it is possible to consider the restriction  $\sigma_{\lambda,K}^\oplus=\sigma_{\lambda,K}\restriction_{K^{(\Gamma)}}$ of  $\sigma_{\lambda,K}$ to $K^{(\Gamma)}$. 
Again, when there is no possibility of confusion we write $\sigma_{\lambda,K}^\oplus$ simply as $\sigma_{\lambda}^\oplus$, $\sigma_{\lambda,K}$ or just $\sigma_\lambda$.

There is a close relation between the Bernoulli shifts and the generalized shifts. For example, the left Bernoulli shift and the two-sided Bernoulli shift are generalized shifts (see Examples \ref{NewExample} and \ref{theorem4} (d) respectively), while the right Bernoulli shift $\beta_{K}\restriction_{K^{(\N_0)}}$ cannot be obtained as a  generalized shift $\sigma_{\lambda}^\oplus$ from any function $\lambda:\N_0\to\N_0$. 
Nevertheless, it can be ``approximated" quite well by the generalized shift $\sigma_{\psi}^\oplus$ of $K^{(\N_0)}$ induced by the map $\psi: \N _0 \to \N_0$ defined by $\psi(i)={i-1}$ for every $i>0$ and $\psi({0})={0}$. Indeed, both $\sigma_{\psi}^\oplus$ and   $\beta_{K}\restriction_{K^{(\N_0)}}$ leave invariant the finite-index subgroup $H=K^{(\N)}$  and $\sigma_{\psi}^\oplus\restriction_H =\beta_{K}\restriction_H$ (in particular, they have the same entropy $\log|K|$). 
 
In this paper we compute the entropy of an arbitrary generalized shift $\sigma_{\lambda}^\oplus:K^{(\Gamma)}\to K^{(\Gamma)}$. 
More precisely, we show that  $\ent(\sigma_{\lambda}^\oplus)$, depends only on combinatorial properties of the map $\lambda$ and the cardinality of $K$.
To prove this we analyze the structure of the map $\lambda$ and, more specifically, its (iterated) counter-images
(since, in some sense, the iterations of $\lambda$ and the iterations of $\sigma_\lambda$ ``go in opposite directions'').
Roughly speaking we decompose the generalized shift $\sigma_{\lambda}^\oplus$ in ``independent elementary shifts'' (as the generalized shift $\sigma_{\psi}^\oplus$ considered above), which have the same algebraic entropy as the right Bernoulli shift $\beta_K\restriction_{K^{(\N_0)}}$, and the number $s_\lambda$ of these independent elementary shifts, multiplied by $\log|K|$, gives precisely $\ent(\sigma_{\lambda}^\oplus)$ (see Theorem \ref{theorem6}).

\bigskip
\noindent\textbf{Convention.}
From now on we assume that $\Gamma$ is a non-empty set, $\lambda:\Gamma\rightarrow\Gamma$ is an arbitrary map, and $K$ is a non-trivial finite abelian group.
We denote by $G_\Gamma$ the group $K^{(\Gamma)}$ and for a subset $A$ of $\Gamma$, we identify $K^{(A)}$ with the subgroup $\{x\in G_\Gamma:\supp(x)\subseteq A\}$ of $G_\Gamma$, and we denote $K^{(A)}$ by $G_A$. In case $A=\emptyset$ we assume that $G_\emptyset=\{0\}$.
We denote the generalized shift based on $G_\Gamma$ and $\lambda:\Gamma\to\Gamma$ simply by $\sigma_{\lambda}$, writing it in some cases $\sigma_{\lambda,K}$, when we need to specify the group.

\section{Background on algebraic entropy}\label{background}

We start collecting basic results on algebraic entropy, mainly from \cite{DGSZ,W}.

\begin{lemma}\label{conjugation_by_iso}\emph{\cite[Proposition 1.2]{W}}
Let $G$, $H$ be abelian groups and $\phi:G\to G$, $\eta:H\to H$ endomorphisms. If there exists an isomorphism $\xi:G\to H$ such that $\phi=\xi^{-1}\eta\xi$, then $\ent(\phi)=\ent(\eta)$.
\end{lemma}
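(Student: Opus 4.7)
The plan is to show that conjugation by $\xi$ transports the entire entropy construction (finite subgroups, trajectories, their cardinalities) bijectively from $G$ to $H$, so that the suprema defining $\ent(\phi)$ and $\ent(\eta)$ agree term-by-term.

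First I would observe that from $\phi=\xi^{-1}\eta\xi$ one gets $\eta\circ\xi=\xi\circ\phi$, and hence by induction on $k$ the key commutation relation $\eta^{k}\circ\xi=\xi\circ\phi^{k}$ for every $k\in\N_{0}$. Next, since $\xi:G\to H$ is a group isomorphism, $\xi$ restricts to a bijection between the set of finite subgroups of $G$ and the set of finite subgroups of $H$, and $|\xi(F)|=|F|$ for every such $F$.

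The heart of the argument is then a direct computation of trajectories. For any finite subgroup $F\leq G$ and any $n\in\N$, using that $\xi$ is a homomorphism together with the commutation relation above,
\[
\xi\bigl(T_{n}(\phi,F)\bigr)=\xi\bigl(F+\phi(F)+\cdots+\phi^{n-1}(F)\bigr)=\xi(F)+\eta(\xi(F))+\cdots+\eta^{n-1}(\xi(F))=T_{n}\bigl(\eta,\xi(F)\bigr).
\]
Because $\xi$ is injective, this yields $|T_{n}(\phi,F)|=|T_{n}(\eta,\xi(F))|$, and therefore
\[
H(\phi,F)=\lim_{n\to+\infty}\frac{\log|T_{n}(\phi,F)|}{n}=\lim_{n\to+\infty}\frac{\log|T_{n}(\eta,\xi(F))|}{n}=H(\eta,\xi(F)).
\]

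Finally, since $F\mapsto\xi(F)$ is a bijection between the finite subgroups of $G$ and those of $H$, taking the supremum over all finite subgroups $F\leq G$ on the left matches exactly the supremum over all finite subgroups of $H$ on the right, giving $\ent(\phi)=\ent(\eta)$. There is no real obstacle here; the only point requiring any care is verifying that $\xi$ commutes correctly with iterates of $\phi$ and $\eta$ and that it preserves both the group operation (so that sums of translates are mapped to sums of translates) and cardinalities (so that the logarithms match).
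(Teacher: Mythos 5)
Your proof is correct and is exactly the standard argument behind this fact, which the paper itself does not reprove but simply cites from Weiss \cite[Proposition 1.2]{W}: conjugation by $\xi$ carries trajectories to trajectories, preserving cardinalities, so the defining suprema coincide term by term. No gaps.
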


\begin{lemma}\label{log_law}\emph{\cite[Proposition 1.3]{W}}
Let $G$ be an abelian group and $\phi:G\to G$ an endomorphism. Then $\ent(\phi^k) = k\,\ent(\phi)$ for every non-negative integer $k$. If $\phi$ is an
automorphism, then $\ent(\phi^k) = |k|\ent(\phi)$ for every $k\in \Z$.
\end{lemma}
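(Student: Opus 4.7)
The plan is to establish both inequalities $\ent(\phi^k)\le k\,\ent(\phi)$ and $\ent(\phi^k)\ge k\,\ent(\phi)$ by comparing trajectories of $F$ under $\phi$ with trajectories of suitable finite subgroups under $\phi^k$. The $k=0$ case is trivial since $\phi^0 = \mathrm{id}_G$ has constant trajectories $T_n(\mathrm{id},F)=F$, so $\ent(\mathrm{id})=0$. So fix $k\ge 1$.

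For the upper bound, observe that for any finite subgroup $F$ of $G$,
\[
T_n(\phi^k,F) = F + \phi^k(F) + \cdots + \phi^{(n-1)k}(F) \subseteq F + \phi(F) + \cdots + \phi^{nk-1}(F) = T_{nk}(\phi,F),
\]
so $\log |T_n(\phi^k,F)|\le \log |T_{nk}(\phi,F)|$, and dividing by $n$ and passing to the limit gives $H(\phi^k,F)\le k\,H(\phi,F) \le k\,\ent(\phi)$. Taking the supremum over $F$ yields $\ent(\phi^k)\le k\,\ent(\phi)$.

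For the lower bound, the key trick is to pick the right test subgroup: given a finite subgroup $F$ of $G$, set $F' = T_k(\phi,F) = F + \phi(F) + \cdots + \phi^{k-1}(F)$, which is again finite. A direct expansion shows
\[
T_n(\phi^k,F') = F' + \phi^k(F') + \cdots + \phi^{(n-1)k}(F') = F + \phi(F) + \cdots + \phi^{nk-1}(F) = T_{nk}(\phi,F),
\]
so $|T_n(\phi^k,F')| = |T_{nk}(\phi,F)|$. Dividing by $n$ gives
\[
H(\phi^k,F') = \lim_{n\to\infty} \frac{\log|T_{nk}(\phi,F)|}{n} = k\lim_{n\to\infty}\frac{\log|T_{nk}(\phi,F)|}{nk} = k\,H(\phi,F).
\]
Taking the supremum over $F$ gives $\ent(\phi^k)\ge k\,\ent(\phi)$, completing the equality for $k\ge 0$.

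For the automorphism case with $k<0$, it suffices to show $\ent(\phi^{-1}) = \ent(\phi)$, after which the general case follows by applying the already-proved identity to $\phi^{-1}$ and noting $\phi^k = (\phi^{-1})^{|k|}$. But for each finite $F$, applying the automorphism $\phi^{n-1}$ to $T_n(\phi^{-1},F) = F + \phi^{-1}(F) + \cdots + \phi^{-(n-1)}(F)$ yields $T_n(\phi,F)$, so the two trajectories have the same cardinality and hence $H(\phi^{-1},F) = H(\phi,F)$, giving $\ent(\phi^{-1}) = \ent(\phi)$. The only point requiring some thought is the lower bound above—the clever choice of $F'=T_k(\phi,F)$ to align the iterates of $\phi^k$ with a subsequence of iterates of $\phi$—but once that is in place the computation is routine.
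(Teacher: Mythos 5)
Your proof is correct. The paper itself gives no argument for this lemma --- it is quoted directly from Weiss \cite[Proposition 1.3]{W} --- and your argument is the standard one used there: the containment $T_n(\phi^k,F)\subseteq T_{nk}(\phi,F)$ for the upper bound, the choice $F'=T_k(\phi,F)$ so that $T_n(\phi^k,F')=T_{nk}(\phi,F)$ for the lower bound, and the observation that the automorphism $\phi^{n-1}$ carries $T_n(\phi^{-1},F)$ bijectively onto $T_n(\phi,F)$ for the negative powers. The only implicit ingredient is that the limit defining $H(\phi,F)$ exists (by subadditivity of $n\mapsto\log|T_n(\phi,F)|$), which you use when identifying the limit along the subsequence $nk$ with $H(\phi,F)$; this is part of the definition as stated in the paper, so no gap.
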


The following is one of the main results on algebraic entropy.

\begin{theorem}[Addition Theorem]\label{AT}\emph{\cite[Theorem 3.1]{DGSZ}}
Let $G$ be a torsion abelian group, $\phi:G\to G$ an endomorphism and $H$ a $\phi$-invariant subgroup of $G$. If $\overline{\phi}:G/H\to G/H$ is the endomorphism induced on the quotient by $\phi$, then $\ent(\phi)=\ent(\phi\restriction_H)+\ent(\overline\phi)$.
\end{theorem}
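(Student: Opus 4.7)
The plan is to prove the Addition Theorem by establishing the two inequalities $\ent(\phi)\ge\ent(\phi\restriction_H)+\ent(\overline\phi)$ and $\ent(\phi)\le\ent(\phi\restriction_H)+\ent(\overline\phi)$ separately. In both cases I work directly from the definition of $\ent$ as a supremum over finite subgroups of the growth rate of their $n$-trajectories, and use the projection $\pi\colon G\to G/H$ together with the identity $\pi(T_n(\phi,F))=T_n(\overline\phi,\pi(F))$ to relate trajectories in $G$, $H$, and $G/H$.

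For the lower bound I start with arbitrary finite subgroups $F_1\le H$ and $\overline F_2\le G/H$ and lift $\overline F_2$ to a finite subgroup $F_2\le G$; this lift exists precisely because $G$ is torsion, so finitely many preimages of generators of $\overline F_2$ span a finite subgroup of $G$. Setting $F=F_1+F_2$, I observe that $T_n(\phi,F)=T_n(\phi\restriction_H,F_1)+T_n(\phi,F_2)$, that $\pi$ sends $T_n(\phi,F)$ onto $T_n(\overline\phi,\overline F_2)$, and that $T_n(\phi\restriction_H,F_1)$ lies entirely in the kernel. This yields
$$|T_n(\phi,F)|\ge |T_n(\phi\restriction_H,F_1)|\cdot |T_n(\overline\phi,\overline F_2)|,$$
whence $H(\phi,F)\ge H(\phi\restriction_H,F_1)+H(\overline\phi,\overline F_2)$ after taking logarithms, dividing by $n$, and passing to the limit. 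Taking the supremum over $F_1$ and $\overline F_2$ gives the desired inequality.

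For the upper bound I fix an arbitrary finite subgroup $F\le G$ and set $\overline F=\pi(F)$. Since $\ker\pi=H$ and $\pi(T_n(\phi,F))=T_n(\overline\phi,\overline F)$, I have the exact count
$$|T_n(\phi,F)|=|T_n(\phi,F)\cap H|\cdot |T_n(\overline\phi,\overline F)|.$$
To conclude, I need to control $|T_n(\phi,F)\cap H|$ by a trajectory in $H$ of a single finite subgroup depending only on $F$. Concretely, I aim to produce a finite subgroup $F'\le H$ and an integer $c=c(F)$ such that
$$T_n(\phi,F)\cap H\;\subseteq\; T_{n+c}(\phi\restriction_H,F')\qquad\text{for every }n\in\N.$$
Granting this, dividing $\log|T_n(\phi,F)|$ by $n$ and letting $n\to\infty$ gives $H(\phi,F)\le\ent(\phi\restriction_H)+H(\overline\phi,\overline F)\le\ent(\phi\restriction_H)+\ent(\overline\phi)$, and taking the supremum in $F$ completes the proof.

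The main obstacle is the construction of $F'$ and $c$. An element of $T_n(\phi,F)\cap H$ has the form $\sum_{i=0}^{n-1}\phi^i(f_i)$ with $f_i\in F$ whose images in $G/H$ sum to zero, so $F'$ must absorb all the ``hidden'' corrections produced by cancellations in $G/H$ across iterates of $\phi$. My strategy is to pick representatives in $F$ for each element of the finite group $\pi(F)$, to close the finite set of relations among them inside $H$, and then to enlarge this slightly under $\phi\restriction_H$ to account for iterates; the torsion hypothesis on $G$ is essential here to ensure that the resulting $F'\le H$ is finite, and the constant $c$ bounds the delay needed to express each cancellation as a short trajectory sum in $H$. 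This combinatorial construction is the technical heart of the proof and is precisely where the torsion assumption on $G$ enters in a nontrivial way.
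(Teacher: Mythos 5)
The paper does not prove this statement at all: it is imported verbatim from \cite[Theorem 3.1]{DGSZ}, where the proof is a substantial piece of work, so your proposal is in effect an attempt to reprove a deep cited theorem. The first half of your argument is correct and complete: since $G$ is torsion, $\overline F_2$ lifts to a finite subgroup $F_2\le G$, and the exact count $|T_n(\phi,F)|=|T_n(\phi,F)\cap H|\cdot|T_n(\overline\phi,\overline F_2)|$ together with $T_n(\phi,F)\cap H\supseteq T_n(\phi\restriction_H,F_1)$ gives $\ent(\phi)\ge\ent(\phi\restriction_H)+\ent(\overline\phi)$. That is the easy inequality.

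The upper bound has a genuine gap: the entire difficulty of the theorem is concentrated in your unproved claim that there exist a finite $F'\le H$ and a constant $c$ with $T_n(\phi,F)\cap H\subseteq T_{n+c}(\phi\restriction_H,F')$ for all $n$, and the strategy you offer for it does not engage with the actual obstruction. ``Closing the finite set of relations among the representatives'' only accounts for relations inside the finite group $\pi(F)$ itself; but an element of $T_n(\phi,F)\cap H$ arises from a relation $\sum_{i<n}\overline\phi^{\,i}(\pi(f_i))=0$ among the \emph{iterates} of $\pi(F)$, and for each $n$ these form a subgroup $R_n\le\pi(F)^n$ of unbounded width. To make your plan work you must show that every element of $R_n$ is an integer combination of shifts $x^j r$ of finitely many fixed relations $r$ of bounded degree, with each shift still supported in $[0,n)$; only then does each generator $r$ convert, via chosen lifts into $F$, into a finite-order element $\eta_r\in H$ with $x^jr$ corresponding to $(\phi\restriction_H)^j(\eta_r)$, so that $F'$ can be taken to be generated by $F\cap H$ and the $\eta_r$. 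This truncation property is a standard-basis/stabilization statement for the relation submodule of the $\Z[x]$-module $\pi(F)[x]$ (it can be extracted from the stabilization of the increasing chain of leading-coefficient subgroups of the finite group $\pi(F)$), and it is exactly the step your sketch omits; without it the proof is incomplete. The approach is salvageable along these lines (and once done correctly one gets the containment with $c=0$, so the ``delay'' $c$ you introduce is a symptom that the mechanism of cancellation has not been identified), but as written the technical heart you yourself flag is missing, and it is the theorem.
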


\begin{remark}\label{lemma1} Let $G$ be an abelian group and $\phi:G\to G$ an endomorphism.
\begin{itemize} 
\item[(a)] A particular case of the above theorem was proved in \cite[Proposition 1.4]{W}: if $n\in\N$, $G=\bigoplus_{i=1}^nG_i$  and $G_i$ is a $\phi$-invariant subgroup of $G$ for $i= 1,\ldots, n$, 
then $\ent(\phi)=\sum_{i=1}^n\ent(\phi\restriction_{G_i})$.
\item[(b)] If the group $G$ is a direct limit of $\phi$-invariant subgroups $\{G_i:i\in I\}$, then ${\ent}(\phi)=\sup_{i\in I}\ent(\phi {\restriction_{G_i}})$ \cite[Proposition 1.6]{DGSZ}. 
\item[(c)] Using (b), one can extend (a) to arbitrary direct sums $G=\bigoplus_{i\in I}G_i$.
\end{itemize}
\end{remark}

\begin{lemma}\label{locnilp}\emph{\cite{DGSZ}}
Let $G$ be an abelian group and $\phi:G\to G$ an endomorphism.
\begin{itemize}
	\item[(a)]If $X$ is a set of generators of $G$ and for every $x\in X$ there exists $d_x\in\N$ such that $\phi^{d_x}(x)=0$, then $\ent(\phi)=0$. 
	\item[(b)]If $\phi$ is periodic, then $\ent(\phi)=0$.
\end{itemize}
\end{lemma}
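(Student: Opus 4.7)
My plan is to handle the two parts separately, deducing (b) from a one-line application of Lemma \ref{log_law} and treating (a) by showing that the $n$-trajectories stabilize.

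For part (a) I would fix an arbitrary finite subgroup $F\subseteq G$ and aim to show $H(\phi,F)=0$. Since $X$ generates $G$ and $F$ has only finitely many elements, each of them is a $\mathbb Z$-combination of finitely many elements of $X$; collecting these gives a finite subset $X_F=\{x_1,\ldots,x_k\}\subseteq X$ with $F\subseteq\langle X_F\rangle$. Setting $d=\max\{d_{x_1},\ldots,d_{x_k}\}$, the $\mathbb Z$-linearity of $\phi^d$ forces $\phi^d(\langle X_F\rangle)=0$ and hence $\phi^d(F)=0$. Consequently $T_n(\phi,F)=T_d(\phi,F)$ for every $n\ge d$; this is a fixed finite group, so
\[
H(\phi,F)=\lim_{n\to\infty}\frac{\log|T_d(\phi,F)|}{n}=0.
\]
Taking the supremum over all finite $F\subseteq G$ yields $\ent(\phi)=0$.

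For part (b), I interpret ``$\phi$ periodic'' as $\phi^n=\mathrm{id}_G$ for some $n\in\N$, so that $\phi$ is an automorphism of finite order. I would invoke Lemma \ref{log_law} to obtain $n\,\ent(\phi)=\ent(\phi^n)=\ent(\mathrm{id}_G)$. The identity has entropy $0$ (each $T_n(\mathrm{id}_G,F)$ equals the fixed finite group $F$, so $H(\mathrm{id}_G,F)=0$), and dividing by $n$ forces $\ent(\phi)=0$.

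No step here poses a real obstacle; the only point worth a second thought in (a) is the reduction to a finite $X_F$, which rests on the elementary observation that a \emph{finite} subgroup $F$ of $G$ is always contained in the subgroup spanned by finitely many elements of $X$. Once that is in place both items become a matter of bookkeeping, plus, for (b), a direct appeal to Lemma \ref{log_law}.
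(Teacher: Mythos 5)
Your proof is correct. Note that the paper does not prove this lemma itself --- it is quoted from \cite{DGSZ} --- so there is no in-paper argument to compare against; your treatment of (a), reducing a finite subgroup $F$ to finitely many generators $x_1,\dots,x_k\in X$ so that $\phi^d(F)=0$ and the trajectories $T_n(\phi,F)$ stabilize, is the standard one, and your one-line deduction of (b) from Lemma \ref{log_law} is valid under the reading ``$\phi$ periodic $\iff$ $\phi^n=\mathrm{id}_G$ for some $n\in\N$'', which is exactly how the paper uses the term (e.g.\ in Example \ref{example10}(a$_1$) and for $\sigma_{\mu_1}$).
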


\section{Strings and an effective equivalence relation}

Now we introduce a notion that will play a prominent role in the computation of the algebraic entropy of the generalized shifts. 

\begin{deff}
\begin{itemize}
	\item[(a)] A \emph{string} of $\lambda$ (in $\Gamma$) is an infinite sequence of pairwise distinct elements $S=\{m_t\}_{-t\in\N_0}$ such that $\lambda(m_t)=m_{t+1}$ for every $-t\in\N$. 
	\item[(b)] Let $s_\lambda:=\sup\{|\mathcal F|: \mathcal F\ \text{is a family of pairwise disjoint strings of $\lambda$} \}$, and
	\item[(c)] $\Gamma^+:=\bigcap_{n=1}^\infty\lambda^n(\Gamma)$.
\end{itemize}
\end{deff}

A string $S=\{m_t\}_{-t\in\N_0}$ of $\lambda$ in $\Gamma$ is said to be \emph{acyclic} if $\lambda^n(m_0)\not\in S$ for every $n\in\N$. The next claim is easy to prove.

\begin{claim}\label{acyclic}
Each string $S$ of $\lambda$ in $\Gamma$ contains an acyclic string $S'$ of $\lambda$.
\end{claim}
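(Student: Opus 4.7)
The plan is to take $S'=S$ when $S$ is already acyclic, and otherwise to truncate finitely many elements from the "top" of $S$ to produce an acyclic substring. The driving observation is that any failure of acyclicity forces $m_0$ to be a periodic point of $\lambda$, so that only finitely many of its forward iterates lie in $S$.

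First I would check that if $S$ is not acyclic then $m_0$ is periodic: if $\lambda^n(m_0)=m_{-j}$ for some $n\in\N$ and $j\in\N_0$, then applying $\lambda^j$ to both sides yields $\lambda^{n+j}(m_0)=m_0$. Hence the forward orbit $O:=\{\lambda^n(m_0):n\in\N\}$ is finite, and the index set $F:=\{j\in\N_0:m_{-j}\in O\}$ is finite too, since the elements $m_{-j}$ are pairwise distinct.

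If $F=\emptyset$ then $\lambda^n(m_0)\notin S$ for every $n\in\N$, so $S$ itself is acyclic and we put $S'=S$. Otherwise I would pick $N\in\N$ with $N>\max F$ and set $S':=\{m_{-(N+t)}\}_{-t\in\N_0}$. This is still a string of $\lambda$: the entries are pairwise distinct as a subsequence of $\{m_t\}_{-t\in\N_0}$, and the relation $\lambda(m_{-(N+t+1)})=m_{-(N+t)}$ holds for every $t\in\N_0$.

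Finally I would verify that $S'$ is acyclic using the cutoff $N$. For $1\leq n\leq N$, $\lambda^n(m_{-N})=m_{-N+n}$ has index $-N+n>-N$, so it lies outside $S'=\{m_{-k}:k\geq N\}$. For $n>N$, $\lambda^n(m_{-N})=\lambda^{n-N}(m_0)\in O$; if this element also happens to lie in $S$, then it equals some $m_{-j}$ with $j\in F$, so $j<N$ and again the element is outside $S'$. The only real obstacle I anticipate is the clean identification that non-acyclicity forces periodicity of $m_0$; once this is established, the truncation argument is routine.
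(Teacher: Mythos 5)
Your proof is correct: the key observation that non-acyclicity forces $m_0$ to be periodic, so that its forward orbit meets $S$ in only finitely many indices and a sufficiently deep tail of $S$ is therefore acyclic, is precisely the intended argument (the paper states the claim without proof, calling it easy). The only blemish is an index slip in your display of the truncated string --- $\{m_{-(N+t)}\}_{-t\in\N_0}$ should read $\{m_{t-N}\}_{-t\in\N_0}$ --- but your subsequent identification $S'=\{m_{-k}:k\geq N\}$ makes the intent unambiguous and the verification goes through as written.
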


The importance of $\Gamma^+$ consists in the fact that it contains all strings of $\lambda$ as well as all periodic points of $\lambda$. Obviously, $\Gamma^+ = \Gamma$ if and only if $\lambda$ is surjective.  In general, the restriction $\lambda\restriction_{\Gamma^+}: \Gamma^+\to \Gamma^+$ need not be surjective (but this holds true if $\lambda$ has finite fibers).

\medskip
Consider the following equivalence relation: $i\Re_\lambda j$ in $\Gamma$ if and only if there exist $m,n\in\N_0$ such that $\lambda^n(i)=\lambda^m(j)$.
Let $\alpha_\lambda:=|\{i/\Re_\lambda\in\Gamma/\Re_\lambda:i/\Re_\lambda\ \text{contains at least one string of $\lambda$}\}|$. Obviously, $\alpha_\lambda\leq s_\lambda$.

\begin{example}\label{theorem3-first_part}
Suppose that $\lambda$ is injective.
\begin{itemize}
    \item[(a)] The relation $\Re_\lambda$ in this particular case becomes: $\Re_\lambda=\{(i,j)\in\Gamma\times\Gamma:\exists m\in\Z:
i=\lambda^m(j)\}$.
    \item[(b)]The relation $\Re_\lambda$ has three types of equivalence classes: 
    \begin{itemize}
	\item[(b$_1$)] finite equivalence classes, 
	\item[(b$_2$)] infinite equivalence classes contained in $\Gamma^+$ (i.e., containing a string of $\lambda$), of the form 
	$$\{\ldots,\lambda^{-1}(i),i,\lambda(i),\lambda^2(i),\ldots\}.$$
	\item[(b$_3$)] infinite equivalence classes non-contained in $\Gamma^+$ (i.e., non-containing a string of $\lambda$), of the form $\{i,\lambda(i),\lambda^2(i),\ldots\}$ with $i\in\Gamma\setminus\lambda(\Gamma)$.
	\end{itemize}
    \item[(c)] Then $\alpha_\lambda$ is the number of the infinite equivalence classes in (b$_2$). Consequently $\alpha_\lambda=s_\lambda$.
\end{itemize} 
\end{example}

\begin{example}\label{example5-firstpart}
Let $\Gamma=\N_0$. For every $n\in\N$ let $\varphi_n:\Gamma\to\Gamma$ be defined by
\begin{equation*}
\varphi_n(m)=
\begin{cases}
0 & \text{if}\ m=0,1,\ldots,n, \\
m-1 & \text{otherwise}.
\end{cases}
\end{equation*}
The diagram for $\varphi_n$ with $n>1$ is the following:
\begin{equation*}
\xymatrix@-1pc{
  &   &        &     & \vdots \ar@{->}[d] \\
  &   &        &     & n+2 \ar@{->}[d] \\
  &   &        &     & n+1 \ar@{->}[d]\\
1 \ar@{->}[drr] & 2 \ar@{->}[dr] & \ldots & n-1 \ar@{->}[dl] & n \ar@{->}[dll]  \\
  &   & 0\ar@(dl,dr)[]    &  & \\
}
\bigskip
\end{equation*}
In this case $s_{\varphi_n}=\alpha_{\varphi_n}=1$ and $\lambda(\Gamma)=\Gamma^+=\Gamma\setminus\{1,\ldots,n-1\}$.

\medskip
For every $n\in\N$, let $\psi_n:\Gamma\to\Gamma$ be defined by
\begin{equation*}
\psi_n(m)=
\begin{cases}
0 & \text{if}\ m=0,1,\ldots,n, \\
(k-1)n+i & \text{if}\ m=kn+i\ \text{with}\ 0\leq i<n\ \text{and}\ k\in\N.
\end{cases}
\end{equation*}
The diagram for $\psi_n$ is the following:
\begin{equation*}
\xymatrix@-1pc{
\vdots \ar@{->}[d]& \vdots \ar@{->}[d] & \ldots & \vdots \ar@{->}[d] & \vdots \ar@{->}[d] \\
2n+1 \ar@{->}[d] & 2n+2 \ar@{->}[d] & \ldots & 3n-1 \ar@{->}[d] & 3n \ar@{->}[d] \\
n+1 \ar@{->}[d]  & n+2 \ar@{->}[d]  & \ldots & 2n-1 \ar@{->}[d] & 2n \ar@{->}[d] \\
1 \ar@{->}[drr]  & 2 \ar@{->}[dr]   & \ldots & n-1 \ar@{->}[dl] & n \ar@{->}[dll] \\
                 &                  & 0\ar@(dl,dr)[]&                  & \\
}
\bigskip
\end{equation*}
For this function $s_{\psi_n}=n$, $\alpha_{\psi_n}=1$ and $\Gamma^+=\Gamma$. Note that $\psi_1=\varphi_1$.

\medskip
Let $\Gamma=\N_0\times\N_0$ and $\lambda_0:\Gamma\to\Gamma$ be
defined by
\begin{equation*}
\lambda_0(m,k)=
\begin{cases}
(0,0) & \text{if}\ m=k=0, \\
(m-1,0) & \text{if}\ k=0\ \text{and}\ m\in\N, \\
(m,k-1) & \text{if}\ m\in\N_0\ \text{and}\ k\in\N.
\end{cases}
\end{equation*}
The diagram for $\lambda_0$ is the following:
\begin{equation*}
\xymatrix@-1pc{
\vdots\ar[d] & \vdots\ar[d] & \vdots\ar[d] & \ldots\ar[d] & \vdots\ar[d] & \ldots\ar[d] \\
(0,2)\ar[d] & (1,2)\ar[d] & (2,2)\ar[d] & \ldots\ar[d] & (m,2)\ar[d] & \ldots\ar[d] \\
(0,1) \ar[d] & (1,1)\ar[d] & (2,1)\ar[d] & \ldots\ar[d] & (m,1)\ar[d] & \ldots\ar[d] \\
(0,0)\ar@(dl,dr)[] & (1,0)\ar[l] & (2,0)\ar[l] & \ldots\ar[l] & (m,0)\ar[l] & \ldots\ar[l] \\
}
\bigskip
\end{equation*}
In this case $s_{\lambda_0}=\omega$, $\alpha_{\lambda_0}=1$ and $\Gamma^+=\Gamma$.
\end{example}

\section{The entropy of the generalized shift}

\begin{remark}\label{subgroup}
Let $\mu:\Gamma\to\Gamma$ be a function. If $H$ is a subgroup of an abelian group $L$, then $H^{\Gamma}$ is a $\sigma_{\mu,L}$-invariant subgroup of $L^\Gamma$. Moreover, $\sigma_{\mu,L}\restriction_{H^{\Gamma}}=\sigma_{\mu,H}:H^\Gamma\to H^\Gamma$.
Analogously, if $L^{(\Gamma)}$ is a $\sigma_{\mu,L}^\oplus$-invariant subgroup of $L^\Gamma$, then $H^{(\Gamma)}$ is a $\sigma_{\mu,L}^\oplus$-invariant subgroup of $L^{(\Gamma)}$, and $\sigma_{\mu,L}^\oplus\restriction_{H^{(\Gamma)}}=\sigma_{\mu,H}^\oplus:H^{(\Gamma)}\to H^{(\Gamma)}$.
\end{remark}

\begin{claim}\label{G_F}
Let $x\in G_\Gamma$ and $F=\supp(x)$. Then for every $m\in\N$:
\begin{itemize}
	\item[(a)]$\supp(\sigma_\lambda^m(x))=\lambda^{-m}(F)$;
	\item[(b)]$\sigma_\lambda^m(G_F)\leq G_{\lambda^{-m}(F)}$;
	\item[(c)]$T_m(\sigma_\lambda,G_F)\leq G_{F\cup\lambda^{-1}(F)\cup\ldots\cup\lambda^{-m+1}(F)}$.
\end{itemize}
\end{claim}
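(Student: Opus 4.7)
The plan is to prove (a) by a direct coordinate-wise computation (with induction on $m$), and then to derive (b) and (c) as straightforward consequences.

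For (a), I would first observe the key identity $\sigma_\lambda^m(x) = (x_{\lambda^m(i)})_{i \in \Gamma}$ for every $x = (x_i)_{i \in \Gamma} \in G_\Gamma$ and every $m \in \N$. This is proved by induction on $m$: for $m=1$ it is the definition of $\sigma_\lambda$, and the inductive step is immediate since applying $\sigma_\lambda$ once more changes the index from $\lambda^{m}(i)$ to $\lambda^{m+1}(i)$. From this identity, the $i$-th coordinate of $\sigma_\lambda^m(x)$ is non-zero if and only if $x_{\lambda^m(i)} \neq 0$, i.e.\ if and only if $\lambda^m(i) \in \supp(x) = F$, equivalently $i \in \lambda^{-m}(F)$. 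This gives $\supp(\sigma_\lambda^m(x)) = \lambda^{-m}(F)$.

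For (b), I would apply the general inclusion version of (a): for an arbitrary $y \in G_F$ (so $\supp(y) \subseteq F$), the coordinate formula yields $\supp(\sigma_\lambda^m(y)) = \lambda^{-m}(\supp(y)) \subseteq \lambda^{-m}(F)$. Therefore $\sigma_\lambda^m(y) \in G_{\lambda^{-m}(F)}$, and since this holds for every $y \in G_F$, we conclude $\sigma_\lambda^m(G_F) \leq G_{\lambda^{-m}(F)}$.

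For (c), I would use the definition $T_m(\sigma_\lambda, G_F) = G_F + \sigma_\lambda(G_F) + \cdots + \sigma_\lambda^{m-1}(G_F)$ together with part (b) applied to each summand: $\sigma_\lambda^k(G_F) \leq G_{\lambda^{-k}(F)}$ for every $k = 0, 1, \ldots, m-1$. Since a sum of subgroups of the form $G_{A_k}$ lies in $G_{\bigcup_k A_k}$, the inclusion $T_m(\sigma_\lambda, G_F) \leq G_{F \cup \lambda^{-1}(F) \cup \cdots \cup \lambda^{-(m-1)}(F)}$ follows at once.

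There is no real obstacle here; the only point requiring a bit of care is the iteration formula $(\sigma_\lambda^m(x))_i = x_{\lambda^m(i)}$, which I would state explicitly before (a) since it is the engine for all three parts and can otherwise be miswritten with the wrong direction of composition.
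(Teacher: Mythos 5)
Your proposal is correct and follows essentially the same route as the paper: the paper also computes the $m=1$ case coordinate-wise ($y_i=x_{\lambda(i)}\neq 0$ iff $i\in\lambda^{-1}(F)$), extends to general $m$ by induction, and deduces (b) and (c) exactly as you do. Making the identity $(\sigma_\lambda^m(x))_i=x_{\lambda^m(i)}$ explicit is a reasonable way to package the induction, consistent with $\sigma_\lambda^m=\sigma_{\lambda^m}$ from Proposition \ref{composition}(a).
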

\begin{proof}
(a) If $y=\sigma_\lambda(x)$, then $y_i=x_{\lambda(i)}\neq0$ if and only if $\lambda(i)\in F$, that is, $i\in\lambda^{-1}(F)$, and so $\supp(y)=\lambda^{-1}(F)$. Proceeding by induction it is possible to prove that $\supp(\sigma_\lambda^m(x))=\lambda^{-m}(F)$ for every $m\in\N$.

(b) Follows from (a).

(c) By (b) $T_m(\sigma_\lambda,G_F)\leq G_F+G_{\lambda^{-1}(F)}+\ldots+ G_{\lambda^{-m+1}(F)}$ and $G_F+G_{\lambda^{-1}(F)}+\ldots+ G_{\lambda^{-m+1}(F)}\subseteq G_{F\cup\lambda^{-1}(F)\cup\ldots\cup\lambda^{-m+1}(F)}$.
\end{proof}

The next lemma shows the relevance of our following assumption on $\lambda$ of having finite fibers.

\begin{lemma}\label{lemma2}
The following conditions are equivalent:
\begin{itemize}
\item[(a)] $\lambda^{-1}(i)$ is finite for each $i\in\Gamma$;
\item[(b)] $\sigma_{\lambda,K}(G_\Gamma)\subseteq G_\Gamma$;
\item[(c)] $\sigma_{\lambda,L}(L^{(\Gamma)})\subseteq L^{(\Gamma)}$ for every non-trivial abelian group $L$.
\end{itemize}
\end{lemma}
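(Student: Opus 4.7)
The plan is to go in the cycle (a)$\Rightarrow$(c)$\Rightarrow$(b)$\Rightarrow$(a). The central observation is just the support computation of Claim~\ref{G_F}(a): for any abelian group $L$ and any $x\in L^\Gamma$, $\supp(\sigma_{\lambda,L}(x))=\lambda^{-1}(\supp(x))$. Everything else is bookkeeping with supports.

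For (a)$\Rightarrow$(c), fix a non-trivial abelian group $L$ and take $x\in L^{(\Gamma)}$, so $F:=\supp(x)$ is finite. Then $\supp(\sigma_{\lambda,L}(x))=\lambda^{-1}(F)=\bigcup_{i\in F}\lambda^{-1}(i)$, which is a finite union of finite sets by (a), hence finite. So $\sigma_{\lambda,L}(x)\in L^{(\Gamma)}$.

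The implication (c)$\Rightarrow$(b) is immediate by specializing $L=K$, which is non-trivial by the standing convention.

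For (b)$\Rightarrow$(a), I would argue by contrapositive: suppose there exists $i_0\in\Gamma$ with $\lambda^{-1}(i_0)$ infinite. Pick any $k\in K\setminus\{0\}$ (possible since $K$ is non-trivial) and let $x\in G_\Gamma$ be the element with $x_{i_0}=k$ and $x_i=0$ for $i\neq i_0$; clearly $x\in G_\Gamma$ since $\supp(x)=\{i_0\}$ is finite. Then $\supp(\sigma_{\lambda,K}(x))=\lambda^{-1}(i_0)$ is infinite, so $\sigma_{\lambda,K}(x)\notin G_\Gamma$, contradicting (b).

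There is no real obstacle here; the only point worth checking is that the argument for (b)$\Rightarrow$(a) genuinely uses the assumption that $K$ is non-trivial (which is guaranteed by the paper's standing convention on $K$), since otherwise the witness $x$ could not be produced.
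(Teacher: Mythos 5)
Your proposal is correct and follows essentially the same route as the paper: the same cycle (a)$\Rightarrow$(c)$\Rightarrow$(b)$\Rightarrow$(a), with the support computation of Claim~\ref{G_F}(a) doing all the work, (c)$\Rightarrow$(b) by specialization, and (b)$\Rightarrow$(a) via an element supported on a single index (the paper phrases this last step directly rather than by contrapositive, which is an immaterial difference). No gaps.
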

\begin{proof}
(b)$\Rightarrow$(a) Let $i\in\Gamma$ and $x\in G_{\{i\}}\setminus\{0\}$. By Claim \ref{G_F}(a) $\supp(\sigma_\lambda(x))=\lambda^{-1}(i)$ and by the assumption $\sigma_\lambda(x)\in G_\Gamma$, hence $\lambda^{-1}(i)$ is finite.

A similar argument shows that (a)$\Rightarrow$(c) and (c)$\Rightarrow$(b) is obvious.
\end{proof}

\bigskip
\noindent\textbf{Convention.}
From now on we suppose that $\lambda$ has finite fibers, that is, $\lambda^{-1}(i)$ is finite for every $i\in\Gamma$. 

\bigskip
\begin{proposition}\label{composition}
Let $L$ be an abelian group with at least two elements and let $\mu,\nu:\Gamma\to\Gamma$ be functions with finite fibers. For $\sigma_\mu,\sigma_\nu:L^\Gamma\to L^\Gamma$ and $\sigma_{\mu}^\oplus,\sigma_{\nu}^\oplus:L^{(\Gamma)}\to L^{(\Gamma)}$:
\begin{itemize}
    \item[(a)] $\sigma_\mu\circ\sigma_\nu=\sigma_{\mu\circ\nu}$ and $\sigma_{\mu}^\oplus\circ\sigma_{\nu}^\oplus=\sigma_{\mu\circ\nu}^\oplus$ (hence $\sigma_\mu^m=\sigma_{\mu^m}$ and $(\sigma_{\mu}^\oplus)^m=\sigma_{\mu^m}^\oplus$ for every $m\in\N$).
    \item[(b)] \emph{\cite{AKH}} The following conditions are equivalent:
\begin{itemize}
	\item[(b$_1$)]$\mu$ is injective (respectively, surjective);
	\item[(b$_2$)]$\sigma_\mu$ is surjective (respectively, injective);
	\item[(b$_3$)]$\sigma_{\mu}^\oplus$ is surjective (respectively, injective).
\end{itemize}   
    \item[(c)] In particular, the following conditions are equivalent:
\begin{itemize}
	\item[(c$_1$)]$\mu$ is bijective;
	\item[(c$_2$)]$\sigma_\mu$ is an automorphism of $L^\Gamma$;
	\item[(c$_3$)]$\sigma_{\mu}^\oplus$ is an automorphism of $L^{(\Gamma)}$.
\end{itemize}    
    In this case, $(\sigma_\mu)^{-1}=\sigma_{\mu^{-1}}$ and $(\sigma_{\mu}^\oplus)^{-1}=\sigma_{\mu^{-1}}^\oplus$.
\end{itemize}
\end{proposition}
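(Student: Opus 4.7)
\medskip

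The plan is to handle parts (a), (b), (c) in order, where (a) is a direct coordinate computation, (b) amounts to six implications (or really two since the $\oplus$ version follows from the $L^\Gamma$ version plus a check that things remain finitely supported), and (c) is a corollary of (a) and (b).

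For part (a), I would simply unravel definitions coordinate-wise. Given $x=(x_i)_{i\in\Gamma}\in L^\Gamma$ and any $i\in\Gamma$, the $i$-th coordinate of $\sigma_\mu(\sigma_\nu(x))$ is $\sigma_\nu(x)_{\mu(i)}=x_{\nu(\mu(i))}$, which by definition is the $i$-th coordinate of $\sigma_{\mu\circ\nu}(x)$ (with the convention used in the paper for composition of $\sigma$'s). The identity for $\sigma_\mu^\oplus\circ\sigma_\nu^\oplus$ follows since both $\sigma_\mu$ and $\sigma_\nu$ restrict to $L^{(\Gamma)}$ by Lemma \ref{lemma2} (using the finite-fiber hypothesis on both $\mu$ and $\nu$), so the same coordinate identity applies. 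The power formula $\sigma_\mu^m=\sigma_{\mu^m}$ is then immediate by induction on $m$.

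For part (b), I would argue each direction separately, first in $L^\Gamma$ and then transfer to $L^{(\Gamma)}$. For ``$\mu$ surjective $\Rightarrow$ $\sigma_\mu$ injective'': if $\sigma_\mu(x)=0$ then $x_{\mu(i)}=0$ for all $i$, and surjectivity of $\mu$ forces $x=0$. Conversely, if $\mu$ is not surjective, pick $j\in\Gamma\setminus\mu(\Gamma)$ and any non-zero $x\in L^{(\{j\})}$; then $\sigma_\mu(x)=0$, showing non-injectivity of both $\sigma_\mu$ and $\sigma_\mu^\oplus$ (since this $x$ is finitely supported). For ``$\mu$ injective $\Rightarrow$ $\sigma_\mu$ surjective'': given $y\in L^\Gamma$, define $x_i=y_{\mu^{-1}(i)}$ for $i\in\mu(\Gamma)$ and $x_i=0$ otherwise; injectivity of $\mu$ makes this well-defined, and one checks $\sigma_\mu(x)=y$ directly. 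To upgrade this to $\sigma_\mu^\oplus$: if $y\in L^{(\Gamma)}$ has finite support $F$, then the constructed $x$ is supported on $\mu(F)$, which is finite, so $x\in L^{(\Gamma)}$. Conversely, if $\mu$ is not injective, say $\mu(i)=\mu(j)$ with $i\ne j$, then any element of the image of $\sigma_\mu$ (or $\sigma_\mu^\oplus$) has equal $i$-th and $j$-th coordinates, so neither map is surjective (take $y\in L^{(\{i\})}$ with $y_i\ne 0$).

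Part (c) is then essentially a formal consequence: (c$_1$) $\Leftrightarrow$ (c$_2$) and (c$_1$) $\Leftrightarrow$ (c$_3$) follow by combining both equivalences of (b). For the inverse identities, given $\mu$ bijective, (a) gives $\sigma_\mu\circ\sigma_{\mu^{-1}}=\sigma_{\mu\circ\mu^{-1}}=\sigma_{\mathrm{id}_\Gamma}=\mathrm{id}_{L^\Gamma}$ and symmetrically on the other side, so $(\sigma_\mu)^{-1}=\sigma_{\mu^{-1}}$; the same computation restricted to $L^{(\Gamma)}$ (valid since $\mu^{-1}$ also has finite fibers, being a bijection) gives $(\sigma_\mu^\oplus)^{-1}=\sigma_{\mu^{-1}}^\oplus$. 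No step is genuinely hard, but the only place where one must be careful is tracking that finite support is preserved in the surjectivity argument for $\sigma_\mu^\oplus$, which uses $|\mu(F)|\leq|F|<\infty$; this is the main (minor) obstacle.
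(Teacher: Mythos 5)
The paper gives no proof of this proposition at all (part (b) is simply attributed to \cite{AKH}, and (a), (c) are left as routine), so there is nothing to compare against; your argument is correct and complete, and it is the expected one: coordinate-wise unravelling for (a), explicit witnesses for each of the four directions in (b) with a support check to descend to $L^{(\Gamma)}$, and formal consequences for (c). The one point you should make explicit rather than hide in a parenthetical is the composition order in (a): your computation correctly gives $\bigl(\sigma_\mu(\sigma_\nu(x))\bigr)_i=x_{\nu(\mu(i))}$, which under the standard convention $(\mu\circ\nu)(i)=\mu(\nu(i))$ is the $i$-th coordinate of $\sigma_{\nu\circ\mu}(x)$, not of $\sigma_{\mu\circ\nu}(x)$; the map $\lambda\mapsto\sigma_\lambda$ is contravariant, so as literally written the identity in the statement has the subscripts in the wrong order (or presupposes the reversed composition convention). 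This is harmless for everything the paper actually uses --- the power formula $\sigma_\mu^m=\sigma_{\mu^m}$, the identity $\sigma_\mu\circ\sigma_{\mu^{-1}}=\sigma_{\mathrm{id}_\Gamma}$ in (c), and all later applications involve a single map, where both orders coincide --- but a referee would want it stated plainly rather than deferred to ``the convention used in the paper.'' Two further details you handle correctly and should keep: the constructed preimage in the surjectivity argument is supported on $\mu(F)$ with $|\mu(F)|\leq|F|<\infty$, and $\mu\circ\nu$ again has finite fibers (as $(\mu\circ\nu)^{-1}(i)=\nu^{-1}(\mu^{-1}(i))$ is a finite union of finite sets), so $\sigma^{\oplus}_{\mu\circ\nu}$ is indeed defined; the latter is worth one explicit sentence.
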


Note that the equivalences (b$_1$)$\Leftrightarrow$(b$_2$) and (c$_1$)$\Leftrightarrow$(c$_2$) hold without any assumption on the fibers of $\mu$ and $\nu$.

\begin{corollary}\label{G_Fd}
For every $m\in\N$, $\ker\sigma_\lambda^m=G_{\Gamma\setminus\lambda^m(\Gamma)}$.
\end{corollary}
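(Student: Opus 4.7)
The plan is to reduce the statement to the case $m=1$ by invoking Proposition \ref{composition}(a), which yields $\sigma_\lambda^m = \sigma_{\lambda^m}$. Since the composition of finite-fiber maps is again finite-fiber, $\lambda^m$ has finite fibers, so $\sigma_{\lambda^m}$ is a well-defined endomorphism of $G_\Gamma$ by Lemma \ref{lemma2}. Thus it suffices to establish the following general fact: for any finite-fiber map $\mu : \Gamma \to \Gamma$, one has $\ker \sigma_\mu = G_{\Gamma \setminus \mu(\Gamma)}$. Applying this with $\mu = \lambda^m$ and observing that $\lambda^m(\Gamma)$ equals the image of the $m$-fold iterate gives the claim.

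For the base fact, I would simply unfold the definition of the generalized shift. For $x = (x_i)_{i\in\Gamma} \in G_\Gamma$, the condition $\sigma_\mu(x) = 0$ means $(x_{\mu(i)})_{i\in\Gamma} = 0$, i.e., $x_{\mu(i)} = 0$ for every $i \in \Gamma$. Since the indices $\mu(i)$ range exactly over the set $\mu(\Gamma)$, this is equivalent to $x_j = 0$ for every $j \in \mu(\Gamma)$, which in turn is equivalent to $\supp(x) \subseteq \Gamma \setminus \mu(\Gamma)$, i.e., $x \in G_{\Gamma \setminus \mu(\Gamma)}$. Both inclusions $\ker \sigma_\mu \subseteq G_{\Gamma \setminus \mu(\Gamma)}$ and $G_{\Gamma \setminus \mu(\Gamma)} \subseteq \ker \sigma_\mu$ follow from the chain of equivalences.

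There is essentially no obstacle here: the entire argument is a direct unpacking of the definition once the reduction $\sigma_\lambda^m = \sigma_{\lambda^m}$ (already available from Proposition \ref{composition}(a)) is performed. The only minor point to keep in mind is the identification of $G_A$ with $\{x \in G_\Gamma : \supp(x) \subseteq A\}$ from the paper's convention, which is exactly what makes the last equivalence a tautology rather than an additional step.
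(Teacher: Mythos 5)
Your proposal is correct and follows essentially the same route as the paper: both reduce to the case $m=1$ via Proposition \ref{composition}(a) and then verify $\ker\sigma_\lambda=G_{\Gamma\setminus\lambda(\Gamma)}$ by a direct support computation (the paper routes this through Claim \ref{G_F}(a), i.e., $\supp(\sigma_\lambda(x))=\lambda^{-1}(\supp(x))$, while you unfold the definition directly, which amounts to the same calculation).
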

\begin{proof}
It suffices to prove that $\ker\sigma_\lambda=G_{\Gamma\setminus\lambda(\Gamma)}$ and then apply Proposition \ref{composition}(a). If $x\in\ker\sigma_\lambda$, equivalently $\supp(\sigma_\lambda(x))=\emptyset$. By Claim \ref{G_F}(a) $\supp(\sigma_\lambda(x))=\lambda^{-1}(\supp(x))$. Then $\supp(\sigma_\lambda(x))=\lambda^{-1}(\supp(x))=\emptyset$ if and only if $\supp(x)\cap\lambda(\Gamma)=\emptyset$. This is the same as $\supp(x)\subseteq \Gamma\setminus \lambda(\Gamma)$, that is, $x\in G_{\Gamma\setminus \lambda(\Gamma)}$.
\end{proof}

The next lemma gives a characterization (in terms of $\lambda$) of the $\sigma_\lambda$-invariance of the subgroups $G_A$ of $G_\Gamma$.

\begin{lemma}\label{invariance}
If $A\subseteq \Gamma$, then $G_A$ is $\sigma_\lambda$-invariant if and only if $\lambda^{-1}(A)\subseteq A$. If $A$ is also $\lambda$-invariant, then $\sigma_\lambda\restriction_{G_A}=\sigma_{\lambda\restriction_A}$.
\end{lemma}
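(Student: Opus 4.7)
The plan is to reduce everything to the support identity in Claim \ref{G_F}(a), namely $\supp(\sigma_\lambda(x))=\lambda^{-1}(\supp(x))$, which is the only real input needed.

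First, I would prove the $\sigma_\lambda$-invariance characterization. For the implication $\lambda^{-1}(A)\subseteq A \Rightarrow \sigma_\lambda(G_A)\subseteq G_A$: take an arbitrary $x\in G_A$, so $\supp(x)\subseteq A$; then by Claim \ref{G_F}(a) we get $\supp(\sigma_\lambda(x))=\lambda^{-1}(\supp(x))\subseteq\lambda^{-1}(A)\subseteq A$, hence $\sigma_\lambda(x)\in G_A$. Finiteness of $\supp(\sigma_\lambda(x))$ is automatic under the standing finite-fibers assumption. For the converse, I would argue contrapositively: suppose there exists $i\in\lambda^{-1}(A)\setminus A$, pick any nonzero $x\in G_{\{\lambda(i)\}}$, which lies in $G_A$ since $\lambda(i)\in A$; then $\supp(\sigma_\lambda(x))=\lambda^{-1}(\{\lambda(i)\})$ contains $i\notin A$, so $\sigma_\lambda(x)\notin G_A$, contradicting $\sigma_\lambda$-invariance of $G_A$.

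For the second assertion, $\lambda$-invariance $\lambda(A)\subseteq A$ guarantees that $\lambda\restriction_A:A\to A$ is a well-defined self-map, and its fibers, being subsets of the fibers of $\lambda$, are still finite, so $\sigma_{\lambda\restriction_A}:G_A\to G_A$ makes sense. For $x\in G_A$ the first part gives $\sigma_\lambda(x)\in G_A$, so both $\sigma_\lambda(x)$ and $\sigma_{\lambda\restriction_A}(x)$ are elements of $G_A$; it then suffices to check their coordinates agree on $A$. But for every $i\in A$ we have $(\sigma_\lambda(x))_i=x_{\lambda(i)}=x_{(\lambda\restriction_A)(i)}=(\sigma_{\lambda\restriction_A}(x))_i$, which finishes the identification.

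The argument is essentially a bookkeeping exercise on supports, and no step presents a genuine obstacle. The only mild subtlety is making sure that the element $x\in G_{\{\lambda(i)\}}$ used in the contrapositive is indeed nonzero (which is possible because $K$ is non-trivial by the global convention) and that the restriction $\lambda\restriction_A$ inherits finite fibers, but both are immediate under the standing hypotheses.
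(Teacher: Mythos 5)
Your proof is correct and follows essentially the same route as the paper's: both reduce the invariance criterion to the support identity $\supp(\sigma_\lambda(x))=\lambda^{-1}(\supp(x))$ from Claim \ref{G_F}(a) (the paper tests it on the generating subgroups $G_{\{i\}}$, $i\in A$, while you treat a general $x$ and argue the converse contrapositively), and both verify the identification $\sigma_\lambda\restriction_{G_A}=\sigma_{\lambda\restriction_A}$ by a direct coordinate check. Your write-up is somewhat more detailed but contains no new idea and no gap.
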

\begin{proof}
The condition $\sigma_\lambda(G_A)\subseteq G_A$ is equivalent to $\sigma_\lambda(G_i)\subseteq G_A$ for every $i\in A$, that is, $\lambda^{-1}(i)\subseteq A$ for every $i\in A$, which is equivalent to $\lambda^{-1}(A)\subseteq A$.
Assume now that $\lambda^{-1}(A)\cup\lambda(A)\subseteq A$. Then it is possible to consider both $\sigma_\lambda\restriction_{G_A}$ and $\sigma_{\lambda\restriction_A}$. It is clear that they coincide on $G_A$.
\end{proof}

Lemma \ref{invariance} shows that in case $\lambda^{-1}(A)\subseteq A$ for $A\subseteq \Gamma$, it is possible to consider $\sigma_\lambda\restriction_{G_A}:G_A\to G_A$.

\begin{remark}\label{relambda}
We see here that we can assume that for the relation $\Re_\lambda$ there exists only one equivalence class in $\Gamma$ (so coinciding with the whole $\Gamma$). 

\smallskip
Indeed, if $i/\Re_\lambda$ is a generic equivalence class, then $i/\Re_\lambda\supseteq \lambda(i/\Re_\lambda)\cup\lambda^{-1}(i/\Re_\lambda)$. By Lemma \ref{invariance} 
\begin{equation}\label{Relambda}
\sigma_\lambda\restriction_{i/\Re_\lambda}=\sigma_{\lambda\restriction_{i/\Re_\lambda}}.
\end{equation}
Let now $R$ be a set of representing elements of $\Re_\lambda$ in $\Gamma$. Then $G_\Gamma=\bigoplus_{i\in R}G_{i/\Re_\lambda}$. By Remark \ref{lemma1}(c) and \eqref{Relambda} $$\ent(\sigma_\lambda)=\sum_{i\in R}\ent (\sigma_\lambda\restriction_{G_{i/\Re_\lambda}})=\sum_{i\in R}\ent(\sigma_{\lambda\restriction_{i/\Re_\lambda}}),$$ and so we can assume that $R$ is a singleton.
\end{remark}

The next result gives the very useful formula \eqref{magic}, which is applied in the proof of the main theorem.

\begin{remark}\label{add-rem}
Let $\Gamma=\Gamma'\cup\Gamma''$ be a partition of $\Gamma$. Then $\lambda^{-1}(\Gamma')\subseteq \Gamma'$ if and only if $\lambda(\Gamma'')\subseteq\Gamma''$. Suppose that these equivalent conditions hold. By Lemma \ref{invariance} $G_{\Gamma'}$ is $\sigma_\lambda$-invariant. 
\begin{itemize}
    \item[(a)]  Let $p_2:G_\Gamma=G_{\Gamma'}\oplus G_{\Gamma''}\to G_{\Gamma''}$ and $\pi:G_\Gamma\to G_\Gamma/G_{\Gamma'}$ be the canonical projections.
    Denote by $\xi:G_\Gamma/G_{\Gamma'}\to G_{\Gamma''}$ the (unique) isomorphism  such that $p_2=\xi\circ\pi$.   Finally, let 
  $\overline{\sigma_\lambda}:G_\Gamma/G_{\Gamma'}\to G_\Gamma/G_{\Gamma'}$ be the homomorphism induced by $\sigma_\lambda$. Then $\overline{\sigma_\lambda}=\xi^{-1}\sigma_{\lambda\restriction_{\Gamma''}}\xi$.
To better explain the situation, this means that the following diagram commutes:
\begin{equation*}
\xymatrix@-1pc{
G_\Gamma \ar@{->}[dr]_{\pi} \ar@{->}[rr]^{p_2} & & G_{\Gamma''} \ar[rr]^{\sigma_{\lambda\restriction_{\Gamma''}}}& & G_{\Gamma''} \\
 & G_\Gamma/G_{\Gamma'} \ar@{->}[ur]_{\xi} \ar[rr]^{\overline{\sigma_\lambda}}& & G_\Gamma/G_{\Gamma'} \ar@{->}[ur]_{\xi}\\
}
\end{equation*}
By Lemma \ref{conjugation_by_iso} $\ent(\overline{\sigma_\lambda})=\ent(\sigma_{\lambda\restriction_{\Gamma''}})$.
   \item[(b)] By (a) and Theorem \ref{AT} 
    \begin{equation}\label{magic}
    \ent(\sigma_\lambda)=\ent(\sigma_{\lambda}\restriction_{G_{\Gamma'}})+\ent({\sigma_{\lambda\restriction_{\Gamma''}}}).
    \end{equation}
     Therefore,
	\begin{equation}\label{magic2}
	\ent(\sigma_\lambda)\geq\ent(\sigma_{\lambda}\restriction_{G_{\Gamma'}})\ \text{and}\ \ent(\sigma_\lambda)\geq\ent({\sigma_{\lambda\restriction_{\Gamma''}}}).
	\end{equation}
\end{itemize}
\end{remark}

The next corollary shows that $\ent(\sigma_\lambda)$ can be computed from its restriction to $\Gamma^+\subseteq\Gamma$.

\begin{corollary}\label{corollary}
\begin{itemize}
	\item[(a)]$\ent(\sigma_\lambda)=\ent(\sigma_{\lambda\restriction_{\Gamma^+}})$.
	\item[(b)]In particular, for each $k\in\N$, $\ent(\sigma_\lambda)=\ent(\sigma_{\lambda\restriction_{\lambda^k(\Gamma)}})$.
	\item[(c)]If there exists a non-empty finite subset $F$ of $\Gamma$ such that $\Gamma=\bigcup_{n\in\N_0}\lambda^{n}(F)$, then $\Gamma^+=\Per(\lambda)$ and consequently $\ent(\sigma_\lambda)=0$.
\end{itemize}
\end{corollary}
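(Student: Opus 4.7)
The three parts are linked: (a) does the main work through a decomposition via Remark \ref{add-rem}, (b) is an easier variant of the same idea, and (c) reduces to (a) once one shows $\Gamma^+$ is finite.

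For (a), I would apply Remark \ref{add-rem}(b) to the partition $\Gamma=\Gamma'\cup\Gamma''$ with $\Gamma''=\Gamma^+$ and $\Gamma'=\Gamma\setminus\Gamma^+$. The inclusion $\lambda(\Gamma^+)\subseteq\Gamma^+$ is immediate from $\Gamma^+=\bigcap_{n\geq1}\lambda^n(\Gamma)$, so the invariance hypothesis is met. Formula \eqref{magic} then yields $\ent(\sigma_\lambda)=\ent(\sigma_\lambda\restriction_{G_{\Gamma'}})+\ent(\sigma_{\lambda\restriction_{\Gamma^+}})$, and it remains to prove the first summand vanishes. For this I would use Lemma \ref{locnilp}(a): the group $G_{\Gamma'}$ is generated by the $G_{\{i\}}$ with $i\in\Gamma\setminus\Gamma^+$, and for every such $i$ there is $m_i\in\N$ with $i\notin\lambda^{m_i}(\Gamma)$, so $\lambda^{-m_i}(i)=\emptyset$ and Claim \ref{G_F}(a) forces $\sigma_\lambda^{m_i}$ to annihilate $G_{\{i\}}$.

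Part (b) follows from the same scheme with $\Gamma''=\lambda^k(\Gamma)$: the invariance $\lambda(\lambda^k(\Gamma))\subseteq\lambda^k(\Gamma)$ is trivial, and this time the uniform exponent $m=k$ works for every $i\in\Gamma\setminus\lambda^k(\Gamma)$, so the vanishing of $\ent(\sigma_\lambda\restriction_{G_{\Gamma\setminus\lambda^k(\Gamma)}})$ by Lemma \ref{locnilp}(a) is immediate.

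For (c) I would first verify $\Gamma^+=\Per(\lambda)$. The inclusion $\Per(\lambda)\subseteq\Gamma^+$ is clear, since a periodic point lies in $\lambda^n(\Gamma)$ for every $n$. For the reverse, pick $x\in\Gamma^+$; then for each $n\in\N$ there exists $y_n\in\Gamma$ with $\lambda^n(y_n)=x$, and writing $y_n=\lambda^{k_n}(f_n)$ with $f_n\in F$ gives $x=\lambda^{n+k_n}(f_n)$. Since $F$ is finite, the pigeonhole principle yields a fixed $f\in F$ for which the set $\{N\in\N:\lambda^N(f)=x\}$ is infinite; picking $N_1<N_2$ in it gives $\lambda^{N_2-N_1}(x)=x$, so $x\in\Per(\lambda)$. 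The same pigeonhole remark then shows $\Gamma^+$ is a finite union of cycles (at most one coming from each $f\in F$), hence finite. Consequently $G_{\Gamma^+}$ is a finite group, so $\ent(\sigma_{\lambda\restriction_{\Gamma^+}})=0$, and (a) delivers $\ent(\sigma_\lambda)=0$. The only real obstacle is the pigeonhole step in (c); everything else is bookkeeping around the decomposition theorem.
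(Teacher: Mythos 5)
Your argument is correct and follows the paper's proof almost verbatim: the same decomposition of $\Gamma$ into $\Gamma\setminus\Gamma^+$ and $\Gamma^+$ via Remark \ref{add-rem}, with Claim \ref{G_F}(a) and Lemma \ref{locnilp}(a) killing the first summand in (a), and the same pigeonhole argument for $\Gamma^+=\Per(\lambda)$ in (c), where you also spell out the finiteness of $\Gamma^+$ that the paper leaves implicit in its ``consequently''. The only deviation is in (b): the paper deduces it by sandwiching $\ent(\sigma_{\lambda\restriction_{\lambda^k(\Gamma)}})$ between $\ent(\sigma_\lambda)$ and $\ent(\sigma_{\lambda\restriction_{\Gamma^+}})$ via \eqref{magic2} and part (a), whereas you rerun the decomposition with the uniform nilpotency exponent $k$; both routes are valid and of comparable length.
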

\begin{proof}
(a) Since $\lambda(\Gamma^+)\subseteq \Gamma^+$, by \eqref{magic} in Remark \ref{add-rem}(b) $\ent(\sigma_\lambda)=\ent(\sigma_\lambda\restriction_{G_{\Gamma\setminus\Gamma^+}})+\ent(\sigma_{\lambda\restriction_{\Gamma^+}})$. We prove that $\ent(\sigma_\lambda\restriction_{G_{\Gamma\setminus\Gamma^+}})=0$. Let $x\in G_{\Gamma\setminus\Gamma^+}$. Then $\supp(x)\subseteq \Gamma\setminus\Gamma^+$. For every $i\in\supp(x)$ there exists $h(i)\in\N$ such that $i\not\in\lambda^{h(i)}(\Gamma)$, and so $\lambda^{-h(i)}(i)=\emptyset$. Let $h(x)=\max\{h(i):i\in\supp(x)\}$. By Claim \ref{G_F}(a) $\supp(\sigma_\lambda^{h(x)}(x))=\lambda^{-h(x)}(\supp(x))$, which is empty, and so $\sigma_\lambda^{h(x)}(x)=0$. By Lemma \ref{locnilp}(a) $\ent(\sigma_\lambda\restriction_{G_{\Gamma\setminus\Gamma^+}})=0$.

(b) Follows from (a) since $\ent(\sigma_\lambda)\geq\ent(\sigma_{\lambda\restriction_{\lambda^k(\Gamma)}})\geq\ent(\sigma_{\lambda\restriction_{\Gamma^+}})$, where \eqref{magic2} in Remark \ref{add-rem}(b) is applied twice.

(c) Clearly $\Gamma^+\supseteq\Per(\lambda)$. Let $i\in\Gamma^+$. For every $n\in\N$ there exists $i_n\in\Gamma$ such that $\lambda^n(i_n)=i$. Moreover, for $n\in\N$ there exist $m_n\in\N$ and $j_{m_n}\in F$ such that $\lambda^{m_n}(j_{m_n})=i_n$ and so $\lambda^{n+m_n}(j_{m_n})=i$. Since $F$ is finite, there exists $j\in F$
such that for a strictly increasing sequence $({n_k})_{k\in\N}$  in $\N$ one has $j_{m_{n_k}}=j$ for all $k\in\N$. 
Then $\lambda^{n_k+m_{n_k}}(j)=i$ for all $k\in\N$. Choose $k\in\N$ such that $n_k > n_1 + m_{n_1}$, then $n_k + m_{n_k} > n_1 + m_{n_1}$ as well. 
Therefore, $i=\lambda^{ n_1+m_{n_1}}(j)=\lambda^{n_k + m_{n_k}}(j)$ yields  $i\in\Per(\lambda)$.
\end{proof}

\begin{example}\label{NewExample} Let $\lambda: \N_0 \to \N_0$ be defined by $\lambda(n) = n+1$. Then $\sigma_\lambda: K^{(\N_0)}\to K^{(\N_0)}$ coincides with the left Bernoulli shift $_K\beta$ of $K^{(\N_0)}$. Since $\N_0^+=\emptyset$ (in the notation of Corollary \ref{corollary}), we deduce  from Corollary \ref{corollary}(c) that $\ent(\sigma_\lambda)=0$. 
\end{example}

Another application of Corollary \ref{corollary} is the following example, in which $\Gamma$ is a compact metric space.

\begin{example}\label{example19}
Let $(\Gamma,d)$ be a compact metric space and $\lambda:\Gamma\rightarrow\Gamma$ be a contraction (i.e., $d(\lambda(x),\lambda(y)) < d(x,y)$  for every pair of distinct points $x,y$ of $\Gamma$) such that for each $i\in\Gamma$, $\lambda^{-1}(i)$ is finite. Then $\ent(\sigma_\lambda)=0$.

Indeed, using a standard compactness argument one proves that $\lambda(\Gamma^+)=\Gamma^+\ne \emptyset$. Moreover, $|\Gamma^+|=1$ since otherwise
there exist $x,y\in \Gamma^+$ such that $d(x,y)=\mathrm{diam}\, \Gamma^+$. By $\lambda(\Gamma^+)=\Gamma^+$ there exist $x',y'\in \Gamma^+$ with $\lambda(x') = x$ and $\lambda(y') =y$. 
So $\mathrm{diam}\, \Gamma^+= d(x,y)= d(\lambda(x'),\lambda(y')) < d(x',y')$, a contradiction. Therefore, $|\Gamma^+|=1$, so Corollary \ref{corollary}(a) applies.
\end{example}

\subsection{The main theorem}

For a set $X$, in what follows we use the following notation: $$|X|^*:=
\begin{cases}
|X| & \text{in case}\ |X|\ \text{is finite},\\
+\infty & \text{in case}\ |X|\ \text{is infinite}.
\end{cases}$$

\begin{deff}
Two strings $S$ and $S'$ of $\lambda$ in the set $\Gamma$ are said to be \emph{strongly disjoint} if $S$ and $\lambda^n(S')$ are disjoint for every $n\in\N_0$ and $S'$ and $\lambda^n(S)$ are disjoint for every $n\in\N_0$. 
\end{deff}

By the definition of string we immediately have the following result.

\begin{claim}\label{strongly-disjoint}
If $s_\lambda = n\in\N_0$ is finite, then $\Gamma$ contains $n$ strings of $\lambda$ that are pairwise strongly disjoint.
\end{claim}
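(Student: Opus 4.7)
The plan is to start from a family $\{S_1, \ldots, S_n\}$ of pairwise disjoint strings (which exists because $s_\lambda = n$ is finite), reduce to the acyclic case via Claim \ref{acyclic}, and then \emph{truncate from the top}: each $S_i$ will be replaced by a cofinal backward tail $S_i'$ that avoids the forward orbits of the other $S_j$'s. Write $S_i = \{m^{(i)}_{-t} : t \in \N_0\}$ with head $m^{(i)}_0$, and let $F_i := \{\lambda^k(m^{(i)}_0) : k \geq 1\}$ be the forward orbit of the head, so that $\bigcup_{k \geq 0} \lambda^k(S_i) = S_i \cup F_i$.

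The key structural observation driving the truncation is that, for $i \neq j$, the intersection $F_j \cap S_i$ is either empty or a finite top-prefix
$$\{m^{(i)}_0, m^{(i)}_{-1}, \ldots, m^{(i)}_{-t_{ij}}\}$$
of $S_i$, for some $t_{ij} \in \N_0$. Indeed, if $k_0 \geq 1$ is minimal with $\lambda^{k_0}(m^{(j)}_0) \in S_i$, say $\lambda^{k_0}(m^{(j)}_0) = m^{(i)}_{-t}$, then the subsequent iterates $\lambda^{k_0 + s}(m^{(j)}_0) = m^{(i)}_{-t+s}$ climb up $S_i$ for $s = 0, \ldots, t$, exit through the head into $F_i$ for $s > t$, and can never re-enter $S_i$ because $S_i$ is acyclic. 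Hence the entry is unique and each $t_{ij}$ is finite (with the convention $t_{ij} := -1$ when $F_j \cap S_i = \emptyset$).

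Setting $T_i := \max\{t_{ij} : j \neq i\}$, which is finite as the maximum of at most $n-1$ finite values, I define $S_i' := \{m^{(i)}_{-t} : t \geq T_i + 1\}$. Each $S_i'$ is still a string (being an infinite backward tail of $S_i$), and the $S_i'$'s are pairwise disjoint as subsets of the originally disjoint $S_i$'s. To verify strong disjointness, for $i \neq j$ and any $k \geq 0$ one has $\lambda^k(S_j') \subseteq S_j \cup F_j$, and therefore
$$S_i' \cap \lambda^k(S_j') \subseteq (S_i' \cap S_j) \cup (S_i' \cap F_j) = \emptyset,$$
where the first term vanishes by pairwise disjointness of the original strings and the second by the very choice $T_i \geq t_{ij}$.

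The main obstacle is pinning down the top-prefix structure of $F_j \cap S_i$: this is where the acyclicity hypothesis is used decisively, to prevent the orbit $F_j$ from re-entering $S_i$ after exiting its head. Once this geometric fact is established, the truncation amount $T_i$ is finite, well-defined, and forced by the data, and strong disjointness reduces to the routine set-chase displayed above.
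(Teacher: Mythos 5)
Your argument is correct. Be aware, though, that the paper offers no proof of this claim at all: it merely prefaces the statement with ``By the definition of string we immediately have the following result,'' so there is nothing in the text to compare your proof against line by line. What you have written is a sound and complete justification of exactly the point the authors wave away. The two places where care is genuinely needed are both handled properly: first, passing to acyclic sub-strings via Claim \ref{acyclic} preserves pairwise disjointness; second, the identification of $F_j\cap S_i$ as a finite top-prefix of $S_i$ is where acyclicity does real work, since once the orbit of $m^{(j)}_0$ exits $S_i$ through its head it lands in $F_i$, and $F_i\cap S_i=\emptyset$ is precisely the acyclicity of $S_i$, so the orbit cannot re-enter and the entry point (hence $t_{ij}$) is unique and finite. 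Combined with the observation that $\lambda^k(S_j)=S_j\cup\{\lambda(m^{(j)}_0),\dots,\lambda^k(m^{(j)}_0)\}\subseteq S_j\cup F_j$, the final set-chase is routine, and an infinite backward tail of a string is again a string. The only implicit step worth a word is that a family of $n$ pairwise disjoint strings actually exists when $s_\lambda=n$ is finite (the supremum of a set of integers bounded by $n$ equals $n$ only if $n$ is attained), which you use without comment but which is harmless.
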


The next is our main theorem, which calculates the entropy of a generalized shift $\sigma_\lambda:G_\Gamma\to G_\Gamma$, proving that it depends only on the function $\lambda:\Gamma\to\Gamma$ and on the cardinality of $K$.

\begin{theorem}\label{theorem6}
Let $\Gamma$ be a set, $\lambda:\Gamma\to\Gamma$ a function such that $\lambda^{-1}(i)$ is finite for every $i\in\Gamma$, and consider $\sigma_\lambda:G_\Gamma\to G_\Gamma$, where $G_\Gamma=K^{(\Gamma)}$ and $K$ is a non-trivial finite abelian group. Then $$\ent(\sigma_\lambda)=|s_\lambda|^*\log|K|.$$
\end{theorem}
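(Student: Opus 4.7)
My plan is to prove the equality by establishing the two inequalities separately. For the lower bound $\ent(\sigma_\lambda) \geq |s_\lambda|^* \log|K|$, I fix any integer $n \leq s_\lambda$ and combine Claim~\ref{strongly-disjoint} with Claim~\ref{acyclic} to choose $n$ pairwise strongly disjoint acyclic strings $S_i = \{m_{-k}^{(i)}\}_{k \geq 0}$, setting $F := \{m_0^{(1)}, \ldots, m_0^{(n)}\}$ so that $G_F \cong K^n$. I then project $T_j(\sigma_\lambda, G_F)$ onto the $nj$ coordinates indexed by $P_j := \{m_{-l}^{(j')} : 0 \leq l < j,\ 1 \leq j' \leq n\}$, which has $nj$ distinct elements by pairwise disjointness of the $S_i$. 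Using Claim~\ref{G_F}(a), $\sigma_\lambda^k(e_{m_0^{(i)}})$ is supported on $\lambda^{-k}(m_0^{(i)})$; a short case split shows this support meets $P_j$ only at $m_{-k}^{(i)}$. For $l \geq k$, the equality $\lambda^k(m_{-l}^{(j')}) = m_0^{(i)}$ becomes $m_{k-l}^{(j')} = m_0^{(i)}$, which by disjointness of the $S_i$ forces $i = j'$ and $l = k$. For $l < k$ it becomes $\lambda^{k-l}(m_0^{(j')}) = m_0^{(i)}$, ruled out by strong disjointness when $i \neq j'$ and by acyclicity of $S_i$ when $i = j'$. Consequently $T_j(\sigma_\lambda, G_F) \to K^{nj}$ is surjective, so $|T_j(\sigma_\lambda, G_F)| \geq |K|^{nj}$ and $\ent(\sigma_\lambda) \geq H(\sigma_\lambda, G_F) \geq n \log|K|$; letting $n \nearrow |s_\lambda|^*$ finishes this direction.

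For the upper bound (non-trivial only when $s_\lambda = s$ is finite), I reduce via Corollary~\ref{corollary}(a) to $\Gamma = \Gamma^+$ and via Remark~\ref{relambda} to $\Re_\lambda$ having a single equivalence class. For any finite $F \subseteq \Gamma$, Claim~\ref{G_F}(c) gives $T_n(\sigma_\lambda, G_F) \leq G_{F_n}$ with $F_n := \bigcup_{k=0}^{n-1} \lambda^{-k}(F)$, so $H(\sigma_\lambda, G_F) \leq \log|K| \cdot \limsup_n |F_n|/n$, and it suffices to show $\limsup |F_n|/n \leq s$. Writing $B_n := \lambda^{-n}(F) \setminus F_n$, I would show $|B_n| \leq s$ for every $n$ exceeding the length $p$ of the unique periodic cycle $\Omega$ of $\Gamma$ (taking $p = 0$ if $\Omega = \emptyset$). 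Assume for contradiction $|B_n| \geq s + 1$ for such $n$. Any $s + 1$ distinct elements $v_1, \ldots, v_{s+1} \in B_n$ must lie in the non-periodic region $P := \Gamma \setminus \Omega$, else $\lambda^p$-periodicity would give $\lambda^{n-p}(v_j) = \lambda^n(v_j) \in F$, an earlier hit. Their backward trees $\bigcup_k \lambda^{-k}(v_j)$ are pairwise disjoint: a common element forces $v_j = \lambda^{k'-k}(v_i)$ for some $k' > k$, giving $v_j$ a first-hit time strictly less than $n$, a contradiction. Each backward tree lies in $P$ (since $\lambda^{-1}(P) \subseteq P$) and is infinite (non-periodicity makes the level sets $\lambda^{-k}(v_j)$ pairwise disjoint while surjectivity keeps each non-empty); K\"onig's lemma extracts an infinite branch, automatically acyclic because $P$ is cycle-free, i.e., a string of $\lambda$. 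This produces $s + 1$ pairwise disjoint strings, contradicting $s_\lambda = s$. Hence $|F_n| \leq |F_p| + s(n-p)$ for $n \geq p$, and $\limsup |F_n|/n \leq s$.

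The main obstacle is assembling the combinatorial claim $|B_n| \leq s$ coherently: the reductions to $\Gamma = \Gamma^+$ and to a single $\Re_\lambda$-equivalence class are essential for ensuring $\Omega$ is the unique periodic cycle of known length $p$, for making each backward tree infinite via surjectivity, and for making $P$ backward-invariant and cycle-free so that the K\"onig branch is automatically a string. Without these, $|B_n|$ and hence $|F_n|/n$ can be unbounded even when $s_\lambda$ is finite, for example when finite trees hang off the main structure and inflate $|F_n|$ without contributing to any string; it is precisely the passage to $\Gamma^+$ that kills such spurious contributions.
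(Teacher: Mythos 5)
Your lower bound is correct and is a genuinely more hands-on route than the paper's: instead of identifying $\sigma_\lambda\restriction_{G_{S_j}}$ with the Bernoulli shift and invoking the Addition Theorem on $G_\Psi=G_{S_1}\oplus\dots\oplus G_{S_n}$, you compute $|T_j(\sigma_\lambda,G_F)|\geq|K|^{nj}$ directly by projecting onto the coordinates $P_j$, and your case analysis (using disjointness, strong disjointness and acyclicity exactly where each is needed) is sound. The only small loose end there is that Claim~\ref{strongly-disjoint} is stated only for \emph{finite} $s_\lambda$, so for the infinite case you should, as the paper does, first restrict to a forward-invariant $\Lambda$ built from $n$ disjoint strings before extracting strongly disjoint acyclic ones.

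The upper bound, however, has a genuine gap. Your key combinatorial claim --- $|B_n|\leq s$ for every $n>p$, where $B_n=\lambda^{-n}(F)\setminus F_n$ --- is false. Take $\Gamma=\Z$, $\lambda(x)=x+1$ (so $\Gamma=\Gamma^+$, a single $\Re_\lambda$-class, $\Per(\lambda)=\emptyset$, $p=0$, $s_\lambda=1$) and $F=\{0,10\}$: then $B_5=\{-5,5\}$ has two elements. The step that breaks is the disjointness of the backward trees. A common element of the trees of $v_i$ and $v_j$ gives $v_j=\lambda^{q}(v_i)$ with $q=k'-k\geq 1$, and your contradiction ``$v_j$ has a first-hit time $n-q<n$'' only works when $q\leq n$; if $q>n$ (as in the example, where $5=\lambda^{10}(-5)$ and $q=10>n=5$) there is no earlier hit and the trees genuinely intersect, so you cannot extract $s+1$ pairwise disjoint strings. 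The statement you actually need is the weaker one that $|B_n|\leq s$ for all \emph{sufficiently large} $n$ (with a threshold depending on $F$), which still yields $\limsup_n|F_n|/n\leq s$; to prove it you must additionally rule out, for large $n$, the configuration $v_j=\lambda^q(v_i)$ with $q>n$. That configuration forces $\lambda^q(f)\in F$ for some $f=\lambda^n(v_i)\in F$, i.e.\ a return of $F$ to itself after more than $n$ steps, and one has to argue (separating the pre-periodic elements of $F$, which is where the cycle length $p$ really enters) that this can occur for only boundedly many $n$. Until that is supplied, the upper bound is not proved; by contrast, the paper avoids this bookkeeping entirely by splitting $\Gamma=\Psi\sqcup(\Gamma\setminus\Psi)$ and killing the complement with Corollary~\ref{corollary}(c).
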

\begin{proof}
By Corollary \ref{corollary}(a) and Remark \ref{relambda} we can assume without loss of generality that $\Gamma=\Gamma^+$ and that there exists only one equivalence class for $\Re_\lambda$.

\medskip
Suppose that $s_\lambda=n$ for some $n\in\N_0$.
If $n>0$, there exist $n$ pairwise strongly disjoint acyclic strings of $\lambda$ $$S_1:=\{m^1_t\}_{-t\in\N_0},\ldots,S_n:=\{m^n_t\}_{-t\in\N_0}$$ in $\Gamma$ by Claims \ref{acyclic} and \ref{strongly-disjoint}.

Let $\Psi=\emptyset$ if $n=0$ and $\Psi:=S_1\cup\ldots\cup S_n$ otherwise. Then $\lambda^{-1}(\Psi)\subseteq\Psi$, since $\Gamma=\Gamma^+$, and so $G_\Psi$ is $\sigma_\lambda$-invariant by Lemma \ref{invariance}. Let $\nu:=\lambda\restriction_{\Gamma\setminus\Psi}$. 
By \eqref{magic} in Remark \ref{add-rem}(b) $$\ent(\sigma_\lambda)=\ent(\sigma_\lambda\restriction_{G_\Psi})+\ent(\sigma_{\nu}).$$ 
For $F=\{\lambda(m^1_0),\ldots,\lambda(m^n_0)\}$, $\Gamma\setminus\Psi=\bigcup_{n\in\N_0}\nu^n(F)$, since the strings are acyclic. By Corollary \ref{corollary}(c) $\ent(\sigma_{\nu})=0$ and so
\begin{equation}\label{eq2}
\ent(\sigma_\lambda)=\ent(\sigma_\lambda\restriction_{G_\Psi}).
\end{equation}
Now $\Psi$ is disjoint union of $S_1,\ldots, S_n$ and so $G_\Psi=G_{S_1}\oplus\ldots\oplus G_{S_n}$. Moreover, $\lambda^{-1}(S_j)\subseteq S_j$ since $\Gamma=\Gamma^+$, and so by Lemma \ref{invariance} $G_{S_j}$ is $\sigma_\lambda$-invariant for each $j\in\{1,\ldots,n\}$. By Remark \ref{lemma1}(b)
\begin{equation}\label{eq3}
\ent(\sigma_\lambda\restriction_{G_\Psi})=\ent(\sigma_\lambda\restriction_{G_{S_1}})+\ldots+\ent(\sigma_\lambda\restriction_{G_{S_n}}).
\end{equation}
For every $j\in\{1,\ldots,n\}$, since $\sigma_\lambda\restriction_{G_{S_j}}$ is precisely the right Bernoulli shift $\beta_K\restriction_{G_{S_j}}:G_{S_j}\to G_{S_j}$,
\begin{equation}\label{eq4}
\ent(\sigma_\lambda\restriction_{G_{S_j}})=\log|K|.
\end{equation}

By (\ref{eq2}), (\ref{eq3}) and (\ref{eq4}) $\ent(\sigma_\lambda)=n\log|K|=s_\lambda\log|K|$.

\medskip
Assume now that $|s_\lambda|^*=+\infty$. Then $s_\lambda>n$ for every $n\in\N$. Fix $n\in\N$. There exist $n$ pairwise disjoint strings of $\lambda$ $$S_1:=\{m^1_t\}_{-t\in\N_0},\ldots,S_n:=\{m^n_t\}_{-t\in\N_0}$$ in $\Gamma$. Define
\begin{equation*}
\Lambda_1:=S_1\cup\{\lambda^s(m^1_0):s\in\N\}, \ldots,\Lambda_n:=S_n\cup\{\lambda^s(m^n_0):s\in\N\}\ \text{and}\ \Lambda:=\Lambda_1\cup\ldots\cup\Lambda_n.
\end{equation*}
Note that $\lambda(\Lambda)\subseteq \Lambda$, so that we can consider the map $\lambda\restriction_\Lambda: \Lambda \to \Lambda$. 
By \eqref{magic2} in Remark \ref{add-rem}(b)
\begin{equation}\label{eq5}
\ent(\sigma_\lambda)\geq\ent(\sigma_{\lambda\restriction_\Lambda}).
\end{equation}
Since $s_{\lambda\restriction_\Lambda}=n$, by the finite case of the proof of the theorem applied to the map $\lambda\restriction_\Lambda$, $\ent(\sigma_{\lambda\restriction_\Lambda})=n\log|K|$. By (\ref{eq5}) $\ent(\sigma_\lambda)\geq n\log|K|$, and this holds true for every $n\in\N$, so that $\ent(\sigma_\lambda)=+\infty=|s_\lambda|^*\log|K|$.
\end{proof}

We see now a first application of our main theorem.

\begin{example}\label{theorem4}\label{example5}\label{theorem3}
\begin{itemize}
	\item[(a)]In general $\ent(\sigma_\lambda)\geq|\alpha_\lambda|^*\log|K|$. Indeed, by the definitions $s_\lambda\geq\alpha_\lambda$ and so apply Theorem \ref{theorem6}.
	\item[(b)]The inequality in (a) can be strict: consider the functions of Example \ref{example5-firstpart}. In all three cases there exists just one equivalence class and so one equivalent class containing at least one infinite string of $\lambda$. Then $\alpha_{\varphi_n}=\alpha_{\psi_n}=\alpha_{\lambda_0}=1$, but $\ent(\sigma_{\varphi_n})=\log|K|$, $\ent(\sigma_{\psi_n})=n\log|K|$ and $\ent(\sigma_{\lambda_0})=+\infty$.
	\item[(c)] If $\lambda:\Gamma\to \Gamma$ is injective, the inequality in (a) becomes an equality, since in this case $\alpha_\lambda=s_\lambda$: by Theorem \ref{theorem6} and Example \ref{theorem3-first_part} $\ent(\sigma_\lambda)=|\alpha_\lambda|^*\log|K|=|s_\lambda|^*\log|K|$.
	\item[(d)] For $\lambda: \Z\to \Z$ defined by $\lambda(n)=n-1$ for every $n\in\Z$, the generalized shift $\sigma_\lambda$ coincides with the two-sided shift $\overline{\beta}_K\restriction_{K^{(\Z)}}$
	of $K^{(\Z)}$. Since $\alpha_\lambda= s_\lambda= 1$, one obtains from  Theorem \ref{theorem6} $\ent(\overline{\beta}_K\restriction_{K^{(\Z)}}) = \ent(\sigma_\lambda)= \log |K|$.
\end{itemize}
\end{example}

\section{Applications of the main theorem}

We give now other consequences of Theorem \ref{theorem6}.
The first one is an application of Theorem \ref{theorem6} together with Remark \ref{add-rem}(b). It shows that, even if the restriction of $\sigma_\lambda$ to an invariant subgroup is not necessarily a generalized shift, its entropy obeys the same formula as the generalized shift does.

\begin{corollary}\label{lastcor}
Let $\Lambda\subseteq\Gamma$ be such that $\lambda^{-1}(\Lambda)\subseteq\Lambda$. Then $|s_\lambda|^*\log|K|=\ent(\sigma_\lambda\restriction_{G_\Lambda})+|s_{\lambda\restriction_{\Gamma\setminus\Lambda}}|^*\log|K|$. 
\end{corollary}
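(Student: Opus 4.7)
The plan is to combine Theorem \ref{theorem6} with the Addition-Theorem-type formula \eqref{magic} from Remark \ref{add-rem}(b). The hypothesis $\lambda^{-1}(\Lambda)\subseteq\Lambda$ is precisely the condition needed to put us in the setup of that remark, with $\Gamma':=\Lambda$ and $\Gamma'':=\Gamma\setminus\Lambda$. Equivalently, $\lambda(\Gamma\setminus\Lambda)\subseteq\Gamma\setminus\Lambda$, so the restriction $\lambda\restriction_{\Gamma\setminus\Lambda}:\Gamma\setminus\Lambda\to\Gamma\setminus\Lambda$ makes sense and inherits the finite-fiber property from $\lambda$.

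First I would apply \eqref{magic} from Remark \ref{add-rem}(b) with this choice of $\Gamma',\Gamma''$ to obtain
\[
\ent(\sigma_\lambda)=\ent(\sigma_\lambda\restriction_{G_\Lambda})+\ent(\sigma_{\lambda\restriction_{\Gamma\setminus\Lambda}}).
\]
This uses the $\sigma_\lambda$-invariance of $G_\Lambda$ (guaranteed by Lemma \ref{invariance}) together with the identification, via Remark \ref{add-rem}(a) and Lemma \ref{conjugation_by_iso}, of the induced quotient endomorphism on $G_\Gamma/G_\Lambda$ with $\sigma_{\lambda\restriction_{\Gamma\setminus\Lambda}}$ acting on $G_{\Gamma\setminus\Lambda}$.

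Next I would apply Theorem \ref{theorem6} twice. Applied to $\lambda:\Gamma\to\Gamma$ it gives $\ent(\sigma_\lambda)=|s_\lambda|^*\log|K|$, which is the left-hand side we are after. Applied to $\lambda\restriction_{\Gamma\setminus\Lambda}:\Gamma\setminus\Lambda\to\Gamma\setminus\Lambda$ (which has finite fibers, so the theorem applies) it gives $\ent(\sigma_{\lambda\restriction_{\Gamma\setminus\Lambda}})=|s_{\lambda\restriction_{\Gamma\setminus\Lambda}}|^*\log|K|$. Substituting both identities into the displayed equation yields exactly
\[
|s_\lambda|^*\log|K|=\ent(\sigma_\lambda\restriction_{G_\Lambda})+|s_{\lambda\restriction_{\Gamma\setminus\Lambda}}|^*\log|K|,
\]
which is the claimed formula.

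There is essentially no obstacle in this argument: it is a direct corollary of the main theorem and Remark \ref{add-rem}(b). The only point to verify carefully is that the induced quotient endomorphism on $G_\Gamma/G_\Lambda$ is indeed conjugate (via the canonical isomorphism $G_\Gamma/G_\Lambda\cong G_{\Gamma\setminus\Lambda}$) to the generalized shift $\sigma_{\lambda\restriction_{\Gamma\setminus\Lambda}}$; this, however, is exactly the content of Remark \ref{add-rem}(a) and requires no further work. Note also that the statement gives no explicit formula for $\ent(\sigma_\lambda\restriction_{G_\Lambda})$ because the restriction of $\sigma_\lambda$ to $G_\Lambda$ is in general not itself a generalized shift (as $\Lambda$ need not be $\lambda$-invariant), so Theorem \ref{theorem6} is not directly applicable to that summand; nevertheless, its entropy is pinned down by the difference of the other two terms.
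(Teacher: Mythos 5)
Your proof is correct and follows exactly the paper's own argument: apply the formula \eqref{magic} of Remark \ref{add-rem}(b) with $\Gamma'=\Lambda$, $\Gamma''=\Gamma\setminus\Lambda$, then substitute Theorem \ref{theorem6} for $\ent(\sigma_\lambda)$ and for $\ent(\sigma_{\lambda\restriction_{\Gamma\setminus\Lambda}})$. The extra checks you flag (finite fibers of the restriction, the conjugation of the quotient map with $\sigma_{\lambda\restriction_{\Gamma\setminus\Lambda}}$) are indeed already covered by Remark \ref{add-rem}, so nothing is missing.
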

\begin{proof}
By \eqref{magic} in Remark \ref{add-rem}(b) and Theorem \ref{theorem6} $|s_\lambda|^*\log|K|=\ent(\sigma_\lambda)=\ent(\sigma_\lambda\restriction_{G_\Lambda})+\ent(\sigma_{\lambda\restriction_{\Gamma\setminus\Lambda}})=\ent(\sigma_\lambda\restriction_{G_\Lambda})+|s_{\lambda\restriction_{\Gamma\setminus\Lambda}}|^*\log|K|$.
\end{proof}

\begin{remark}\label{lastrem}
If $\Lambda\subseteq\Gamma$ is such that $\lambda^{-1}(\Lambda)\subseteq\Lambda$, and $S$ is a string of $\lambda$ with $S\cap\Lambda\neq\emptyset$, then $S\setminus\Lambda$ is finite and we can assume without loss of generality that $S\subseteq\Lambda$. Hence we can think that either $S\subseteq\Lambda$ or $S\subseteq\Gamma\setminus\Lambda$. This shows that:
\begin{itemize}
	\item[(a)] $s_{\lambda\restriction_{\Gamma\setminus\Lambda}}$ is the number of all strings of $\lambda$ which miss $\Lambda$, and
	\item[(b)] in case $s_\lambda$ is finite, $s_\lambda-s_{\lambda_{\Gamma\setminus\Lambda}}$ is the number of all strings of $\lambda$ contained in $\Lambda$.
\end{itemize}
\end{remark}

\begin{corollary}\label{theorem18}\label{cor7}\label{cor8}
\begin{itemize}
	\item[(a)] Let $\Lambda\subseteq\Gamma$ be such that $\lambda^{-1}(\Lambda)\subseteq\Lambda$.
If $\{\ent(\sigma_\lambda),\ent(\sigma_\lambda\restriction_{G_\Lambda})\}\cap\{0,+\infty\}=\emptyset$,
then $\ent(\sigma_\lambda\restriction_{G_\Lambda})/\ent(\sigma_\lambda)$ is rational.
Moreover, if $0<r\leq 1$ is a rational number such that $r(\ent(\sigma_\lambda)/\log |K|)\in \Z$, then there exists
$\Lambda\subseteq\Gamma$, such that $\ent(\sigma_\lambda\restriction_{G_\Lambda})/\ent(\sigma_\lambda)=r$.
	\item[(b)] Let $L$ be another finite abelian group and assume that both $K$ and $L$ have at least two elements.
Then $\log|L|\ent(\sigma_{\lambda,K})=\log|K|\ent(\sigma_{\lambda,L})$.
\end{itemize}
\end{corollary}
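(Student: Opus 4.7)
Part (b) is essentially immediate: apply Theorem \ref{theorem6} separately to $\sigma_{\lambda,K}$ and $\sigma_{\lambda,L}$, so that both $\log|L|\ent(\sigma_{\lambda,K})$ and $\log|K|\ent(\sigma_{\lambda,L})$ collapse to the common value $|s_\lambda|^*\log|K|\log|L|$. The only subtlety is the case $|s_\lambda|^*=+\infty$, where each side is $+\infty$ (both factors $\log|K|$ and $\log|L|$ are positive, since $|K|,|L|\geq2$).

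For the first statement of part (a), my plan is to combine Corollary \ref{lastcor} with Theorem \ref{theorem6}. The hypothesis $\ent(\sigma_\lambda)\notin\{0,+\infty\}$ forces $s_\lambda=n$ for some $n\in\N$, and then Corollary \ref{lastcor} reads
\[
n\log|K|=\ent(\sigma_\lambda\restriction_{G_\Lambda})+|s_{\lambda\restriction_{\Gamma\setminus\Lambda}}|^*\log|K|.
\]
Because the left-hand entropy in the sum is also finite and nonzero by hypothesis, $s_{\lambda\restriction_{\Gamma\setminus\Lambda}}=k$ for some $k\in\{0,1,\ldots,n-1\}$, giving the ratio $\ent(\sigma_\lambda\restriction_{G_\Lambda})/\ent(\sigma_\lambda)=(n-k)/n\in\Q\cap(0,1]$.

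For the ``moreover'' part, write $m:=r\cdot(\ent(\sigma_\lambda)/\log|K|)=rn$, a positive integer with $1\leq m\leq n$. Using Claim \ref{strongly-disjoint}, pick $n$ pairwise strongly disjoint strings $S_1,\ldots,S_n$ of $\lambda$, and take as candidate
\[
\Lambda:=\bigcup_{k\geq0}\lambda^{-k}(S_1\cup\ldots\cup S_m).
\]
The inclusion $\lambda^{-1}(\Lambda)\subseteq\Lambda$ is immediate from the definition. I then plan to compute $\ent(\sigma_\lambda\restriction_{G_\Lambda})$ via Corollary \ref{lastcor} by showing $s_{\lambda\restriction_{\Gamma\setminus\Lambda}}=n-m$. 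The inequality ``$\geq$'' uses strong disjointness: for $j>m$ and $y\in S_j$, the relation $\lambda^k(S_j)\cap S_i=\emptyset$ for all $k\geq0$ and $i\leq m$ shows $y\notin\Lambda$, so $S_{m+1},\ldots,S_n\subseteq\Gamma\setminus\Lambda$. The reverse inequality ``$\leq$'' follows from the maximality $s_\lambda=n$: any family of pairwise disjoint strings in $\Gamma\setminus\Lambda$, together with the disjoint strings $S_1,\ldots,S_m\subseteq\Lambda$, forms a family of pairwise disjoint strings of $\lambda$ in $\Gamma$. Substituting $s_{\lambda\restriction_{\Gamma\setminus\Lambda}}=n-m$ into Corollary \ref{lastcor} yields $\ent(\sigma_\lambda\restriction_{G_\Lambda})=m\log|K|$ and hence the desired ratio $m/n=r$.

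The main delicate point is the last step: the construction of $\Lambda$ must simultaneously guarantee preimage-closedness (so that $\sigma_\lambda\restriction_{G_\Lambda}$ makes sense via Lemma \ref{invariance}) \emph{and} keep exactly the prescribed number of strings outside $\Lambda$. The naive choice $\Lambda=S_1\cup\ldots\cup S_m$ fails preimage-closedness, while taking the full preimage closure risks ``swallowing'' extra strings. It is precisely the strong disjointness property (rather than mere disjointness) of the chosen strings that prevents orbits from the discarded strings $S_{m+1},\ldots,S_n$ from entering the preimage closure of $S_1\cup\ldots\cup S_m$; this is the key technical hinge of the argument.
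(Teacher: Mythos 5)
Your proposal is correct and follows essentially the same route as the paper: part (b) via Theorem \ref{theorem6}, the rationality claim via Corollary \ref{lastcor}, and for the ``moreover'' part the identical construction $\Lambda=\bigcup_{k\in\N_0}\lambda^{-k}(S_1\cup\ldots\cup S_m)$ from strongly disjoint strings. The only difference is cosmetic: you spell out the two inequalities giving $s_{\lambda\restriction_{\Gamma\setminus\Lambda}}=n-m$ (correctly isolating strong disjointness as the reason the preimage closure does not swallow $S_{m+1},\ldots,S_n$), where the paper delegates this to Remark \ref{lastrem}.
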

\begin{proof}
(a) By Theorem \ref{theorem6}, Corollary \ref{lastcor} and our assumption $\{\ent(\sigma_\lambda),\ent(\sigma_\lambda\restriction_{G_\Lambda})\}\cap\{0,+\infty\}=\emptyset$, 
$s_\lambda$ is finite and 
$\ent(\sigma_\lambda\restriction_{G_\Lambda})/\ent(\sigma_\lambda)=(s_\lambda-s_{\lambda_{\Gamma\setminus\Lambda}})/s_\lambda$ is a rational number. 

\smallskip
By hypothesis there exists $m\in\N$ such that $r s_\lambda=m$.
Let $S_1,\ldots,S_{s_\lambda}$ be the strongly disjoint strings in $\Gamma$ that realize $s_\lambda$ (this is possible by Claim \ref{strongly-disjoint} since $s_\lambda$ is finite by hypothesis).
Since $r\leq1$, it follows that $m\leq s_\lambda$. So it is possible to consider $T:=S_1\cup\ldots\cup S_m$ and define $\Lambda:= \bigcup_{n\in\N_0}\lambda^{-n}(T)$. Then $\lambda^{-1}(\Lambda)\subseteq\Lambda$. By Remark \ref{lastrem} $s_\lambda-s_{\lambda\restriction_{\Gamma\setminus\Lambda}}=m$, since $\{S_1,\ldots, S_m\}$ is a family of pairwise disjoint strings in $\Lambda$ of the maximal possible cardinality. By Corollary \ref{lastcor} $\ent(\sigma_\lambda\restriction_{G_\Lambda})=m\log|K|$. Then $\ent(\sigma_\lambda\restriction_{G_\Lambda})/\ent(\sigma_\lambda)=m/s_\lambda=r$.

\smallskip
(b) Is a simple application of Theorem \ref{theorem6}.
\end{proof}

\begin{corollary}\label{cor9}
For $M$ a torsion infinite abelian group and $\sigma_\lambda:M^{(\Gamma)}\to M^{(\Gamma)}$, $$\ent(\sigma_\lambda)=\begin{cases}
0 & \text{if}\ s_\lambda=0,\\
+\infty & \text{if}\ s_\lambda>0. 
\end{cases}$$
\end{corollary}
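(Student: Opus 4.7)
\bigskip

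\noindent\textbf{Proof proposal.}
The plan is to reduce this to Theorem \ref{theorem6} by writing $M^{(\Gamma)}$ as a directed union of invariant subgroups of the form $K^{(\Gamma)}$ with $K$ a finite subgroup of $M$. The key observation is that any $x\in M^{(\Gamma)}$ has finite support, and since $M$ is torsion, the finitely many non-zero coordinates of $x$ generate a finite subgroup $K\leq M$; thus $x\in K^{(\Gamma)}$. Consequently, letting $K$ range over the finite subgroups of $M$, we have $M^{(\Gamma)}=\bigcup_K K^{(\Gamma)}$, and Remark \ref{subgroup} shows that each $K^{(\Gamma)}$ is a $\sigma_{\lambda,M}$-invariant subgroup with $\sigma_{\lambda,M}\restriction_{K^{(\Gamma)}}=\sigma_{\lambda,K}$.

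First I would invoke Remark \ref{lemma1}(b), presenting $M^{(\Gamma)}$ as the direct limit of the $\sigma_{\lambda,M}$-invariant subgroups $K^{(\Gamma)}$, to obtain
\begin{equation*}
\ent(\sigma_{\lambda,M})=\sup_K\ent(\sigma_{\lambda,M}\restriction_{K^{(\Gamma)}})=\sup_K\ent(\sigma_{\lambda,K}).
\end{equation*}
Next I would apply Theorem \ref{theorem6} to each finite $K$ (with $|K|\geq 2$; the trivial case $K=0$ contributes $0$), yielding $\ent(\sigma_{\lambda,K})=|s_\lambda|^*\log|K|$. Note that $s_\lambda$ is a combinatorial invariant of $\lambda$ alone, independent of the coefficient group.

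If $s_\lambda=0$, then $\ent(\sigma_{\lambda,K})=0$ for every finite $K$, so $\ent(\sigma_{\lambda,M})=0$. If $s_\lambda>0$, then $|s_\lambda|^*\geq 1$, and the conclusion reduces to showing that $\sup_K\log|K|=+\infty$, i.e.\ that the infinite torsion group $M$ admits finite subgroups of arbitrarily large order.

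The one point that needs justification is precisely this last claim. Using the primary decomposition $M=\bigoplus_p M_p$, either (i) some $M_p$ is infinite, or (ii) $M_p\neq 0$ for infinitely many primes $p$. In case (ii) we pick one non-zero element $x_p$ from each non-trivial $M_p$ and, for any $n$, the subgroup generated by $n$ such elements has order at least $2^n$. In case (i), if $M_p$ has elements of unbounded order we use cyclic subgroups $\langle y\rangle\cong\Z(p^{k})$ of arbitrarily large order; otherwise $M_p$ has bounded exponent and, being infinite, by the structure theorem for bounded abelian $p$-groups it contains $\bigoplus_{i=1}^n\Z(p)$ for every $n\in\N$. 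In all cases $\sup_K|K|=+\infty$, hence $\ent(\sigma_{\lambda,M})=+\infty$. This structural dichotomy is the only mildly non-trivial step; everything else is a direct invocation of Theorem \ref{theorem6} and the continuity property of $\ent(-)$ with respect to directed unions of invariant subgroups.
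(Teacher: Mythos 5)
Your proposal is correct and follows essentially the same route as the paper: present $M^{(\Gamma)}$ as the directed union of the $\sigma_\lambda$-invariant subgroups $K^{(\Gamma)}$ over finite subgroups $K\leq M$, identify the restrictions as $\sigma_{\lambda,K}$ via Remark \ref{subgroup}, and take the supremum of $|s_\lambda|^*\log|K|$ from Theorem \ref{theorem6}. The only difference is that you explicitly justify, via the primary decomposition, that an infinite torsion abelian group has finite subgroups of arbitrarily large order --- a step the paper's proof leaves implicit --- which is a welcome addition but not a different argument.
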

\begin{proof}
In view of Remark \ref{subgroup} and Theorem \ref{theorem6}, it is easy to see that 
\begin{align*}
\ent(\sigma_\lambda)&=\sup\{\ent(\sigma_\lambda\restriction_{H^{(\Gamma)}}):H\ \text{is a finite subgroup of}\ M\}\\
&=\sup\{\ent(\sigma_{\lambda,H}):H\ \text{is a finite subgroup of}\ M\}\\
&=\sup\{|s_\lambda|^*\log|H|:H\ \text{is a finite subgroup of}\ M\}.
\end{align*}
If $s_\lambda=0$, $\ent(\sigma_\lambda)=0$.
If $s_\lambda>0$, then $\ent(\sigma_\lambda)$ converges to $+\infty$ with $\log|H|$.
\end{proof}

\begin{example}\label{example10}
\begin{itemize}
\item[(a)]Let $\Gamma$ be a monoid.
\begin{itemize}
  \item[(a$_1$)] For each $s\in \Gamma$ consider $\lambda_s,\rho_s:\Gamma\rightarrow\Gamma$ defined by $\lambda_s(t)=st$ and $\rho_s(t)=ts$ for every $t\in\Gamma$. The element $s$ is invertible if and only if $\lambda_s$ and $\rho_s$ are bijective. By Example \ref{theorem3}, the endomorphisms $\sigma_{\lambda_s}$ and $\sigma_{\rho_s}$ of $G_\Gamma$ have the following properties.
    \begin{itemize}
        \item[(i)]If $s$ is of finite order $n\in\N$, then $(\lambda_s)^n=\lambda_{s^n}=id_\Gamma$ (i.e., $\lambda_s$ is periodic) and similarly $(\rho_s)^n=\rho_{s^n}=id_\Gamma$ (i.e., $\rho_s$ is periodic). By Proposition \ref{composition}(a) $\sigma_{\lambda_s}^n=\sigma_{\lambda_s^n}=\sigma_{\lambda_{s^n}}=id_{G_\Gamma}$ and $\sigma_{\rho_s}^n=\sigma_{\rho_s^n}=\sigma_{\rho_{s^n}}=id_{G_\Gamma}$. Hence $\ent(\sigma_{\lambda_s})=\ent(\sigma_{\rho_s})=0$ by Lemma \ref{locnilp}(b).
        \item[(ii)]Suppose that $s$ is invertible. If $s$ has infinite order, then $\lambda_s$ and $\rho_s$ are injective and by Example \ref{theorem4} their entropy is positive. So, $\ent(\sigma_{\lambda_s})=\ent(\sigma_{\rho_s})=0$ if and only if $s$ is of finite order.
    \end{itemize}
\item[(a$_2$)] For each invertible $s\in \Gamma$ consider $\mu_s:\Gamma\to\Gamma$ defined by $\mu_s(t)=s t s^{-1}$ for every $t\in\Gamma$. By Example \ref{theorem3}, the endomorphism $\sigma_{\mu_s}$ of $G_\Gamma$ has $\ent(\sigma_{\mu_s})>0$ if and only if there exists $t\in\Gamma$ such that $\{s^n:n\in\N\}\cap \{v\in\Gamma:vt=tv\}=\emptyset$.
\end{itemize}
	\item[(b)] Suppose now that $\Gamma$ is an abelian group and $\lambda:\Gamma\rightarrow\Gamma$ a group homomorphism such that $\ker\lambda$ is finite (i.e., $\lambda$ has finite fibers).
	\begin{itemize}
		\item[(b$_1$)]If $\Gamma=\Z$, there exists $n\in\Z$ such that $\lambda(x)=n x$ for every $x\in\Z$. If $n\neq\pm 1$, then there exists no string of $\lambda$ and so $\ent(\sigma_\lambda)=0$ by Theorem \ref{theorem6}. If $m=\pm 1$, then $\lambda^2=id_\Gamma$ and so by Proposition \ref{composition}(a) $\sigma_\lambda^2=\sigma_{\lambda^2}=\sigma_{id_\Gamma}=id_{G_\Gamma}$ and by Lemma \ref{locnilp}(b) $\ent(\sigma_\lambda)=0$.
		\item[(b$_2$)]Suppose that $\lambda\in\Aut(\Gamma)$. Then the orbits of $\lambda$ are exactly the equivalence classes of the relation $\Re_\lambda$ (see Example \ref{theorem3-first_part}). Therefore, if $\lambda$ has infinitely many infinite orbits, $\alpha_\lambda$ is infinite and by Example \ref{theorem3-first_part} and Theorem \ref{theorem6} $\ent(\sigma_\lambda)=+\infty$.
		\item[(b$_3$)]Consider $\Gamma=\Z\times\Z$ and $\lambda\in\Aut(\Gamma)$ defined by $\lambda(x,y)=(x+y,y)$ for every $(x,y)\in\Gamma$. For every $n\in\N$ the orbits of $(0,n)$, that is, $$(0,n)/\Re_\lambda=\{\ldots,(-2n,n),(-n,n),(0,n),(n,n),(2n,n),\ldots\},$$ are infinitely many and pairwise disjoint. Then $\ent(\sigma_\lambda)=+\infty$ by (b$_2$).
	\end{itemize}
\end{itemize}
\end{example}

The next example is dedicated to the composition of generalized shifts. Let us mention that from the formulas $\ent(\sigma_\lambda^k)=k \, \ent(\sigma_\lambda)$ (see Lemma \ref{log_law}) and $\sigma_\lambda^k= \sigma_{\lambda^k}$ (see Proposition \ref{composition}), and Theorem  \ref{theorem6} we obtain the useful non-obvious formula $s_{\lambda^k}=ks_{\lambda}$. 

\begin{example}
Let $\Gamma=\N_0$ and let $\mu_1:\Gamma\to\Gamma$ be defined by
\begin{equation*}
\mu_1(m)=
\begin{cases}
p^{2k} & \text{if}\ m=p^{2k+1}\ \text{with}\ p\in\mathbb P\ \text{and}\ k\in\N, \\
p^{2k+1} & \text{if}\ m=p^{2k}\ \text{with}\ p\in\mathbb P\ \text{and}\ k\in\N, \\
m & \text{otherwise}.
\end{cases}
\end{equation*}
Hence $\mu_1^2=id_\Gamma$ and by Proposition \ref{composition}(a) $\sigma_{\mu_1}^2=\sigma_{\mu_1^2}=\sigma_{id_\Gamma}=id_{G_\Gamma}$. Then $\ent(\sigma_{\mu_1\circ\mu_1})=0$ and since $\sigma_{\mu_1}$ is periodic, $\ent(\sigma_{\mu_1})=0$ by Lemma \ref{locnilp}(b).
The diagram for $\mu_1$ is the following:
\begin{equation*}
\xymatrix@-1pc{
 & & \vdots & \vdots & \vdots & &\ldots & \vdots & \ldots\\
 &   &  2^{2k+1} \ar@{->}[d] & 3^{2k+1} \ar@{->}[d] & 5^{2k+1} \ar@{->}[d] & & \ldots &p^{2k+1} \ar@{->}[d]& \ldots\\
 &   & 2^{2k} \ar@{->}@/_1.5pc/[u] & 3^{2k} \ar@{->}@/_1.5pc/[u] & 5^{2k} \ar@{->}@/_1.5pc/[u] & & \ldots & p^{2k} \ar@{->}@/_1.5pc/[u]&\ldots \\
 &   & \vdots              & \vdots & \vdots & & \ldots & \vdots & \vdots \\
 &   & 8 \ar@{->}[d] & 27 \ar@{->}[d]& 125 \ar@{->}[d] & & \ldots & p^3 \ar@{->}[d] & \ldots\\
 &   & 4 \ar@{->}@/_1.5pc/[u] & 9 \ar@{->}@/_1.5pc/[u] & 25 \ar@{->}@/_1.5pc/[u] & & \ldots & p^2 \ar@{->}@/_1.5pc/[u]&\ldots\\
0 \ar@(dl,dr)[]& 1\ar@(dl,dr)[] & 2\ar@(dl,dr)[] & 3\ar@(dl,dr)[] & 5\ar@(dl,dr)[] & 6\ar@(dl,dr)[] &\ldots \ar@(dl,dr)[] & p\ar@(dl,dr)[] &\ldots\ar@(dl,dr)[]\\
}
\end{equation*}

\bigskip

\bigskip
Let $\mu_2:\Gamma\to\Gamma$ be defined by
\begin{equation*}
\mu_2(m)=
\begin{cases}
p^{2k-1} & \text{if}\ m=p^{2k}\ \text{with}\ p\in\mathbb P\ \text{and}\ k\in\N, \\
p^{2k} & \text{if}\ m=p^{2k-1}\ \text{with}\ p\in\mathbb P\ \text{and}\ k\in\N, \\
m & \text{otherwise (i.e., $m$ is not a prime power)}.
\end{cases}
\end{equation*}
Analogously, $\mu_2^2=id_\Gamma$ and by Proposition \ref{composition}(a) $\sigma_{\mu_2}^2=\sigma_{\mu_2^2}=\sigma_{id_\Gamma}=id_{G_\Gamma}$. Then $\ent(\sigma_{\mu_2\circ\mu_2})=0$ and sice $\sigma_{\mu_2}$ is periodic, so $\ent(\sigma_{\mu_2})=0$ by Lemma \ref{locnilp}(b).
The diagram for $\mu_2$ is the following:
\begin{equation*}
\xymatrix@-1pc{
 & & \vdots & \vdots & \vdots & &\ldots & \vdots & \ldots\\
 &   &  2^{2k} \ar@{->}[d] & 3^{2k} \ar@{->}[d] & 5^{2k} \ar@{->}[d] & & \ldots &p^{2k} \ar@{->}[d]& \ldots\\
 &   & 2^{2k-1} \ar@{->}@/_1.5pc/[u] & 3^{2k-1} \ar@{->}@/_1.5pc/[u] & 5^{2k-1} \ar@{->}@/_1.5pc/[u] & & \ldots & p^{2k-1} \ar@{->}@/_1.5pc/[u]&\ldots \\
 &   & \vdots              & \vdots & \vdots & & \ldots & \vdots & \vdots \\
 &   & 4 \ar@{->}[d]  & 9 \ar@{->}[d] & 25 \ar@{->}[d] & & \ldots & p^2 \ar@{->}[d] &\ldots\\
0 \ar@(dl,dr)[]& 1\ar@(dl,dr)[] & 2\ar@{->}@/_1.5pc/[u] & 3\ar@{->}@/_1.5pc/[u] & 5\ar@{->}@/_1.5pc/[u] & 6\ar@(dl,dr)[] &\ldots \ar@(dl,dr)[] & p\ar@{->}@/_1.5pc/[u] &\ldots\ar@(dl,dr)[]\\
}
\end{equation*}

\bigskip

\bigskip
The diagram for $\mu_1\circ\mu_2:\Gamma\to\Gamma$ is the following:
\begin{equation*}
\xymatrix@-1pc{
 & & \vdots\ar@{->}[d] & \vdots\ar@{->}[d] & \vdots\ar@{->}[d] & &\ldots & \vdots\ar@{->}[d] & \ldots\\
 &   &  2^{2k} \ar@{->}[d] & 3^{2k} \ar@{->}[d] & 5^{2k} \ar@{->}[d] & & \ldots & p^{2k} \ar@{->}[d]& \ldots\\
 &   & 2^{2k-2} \ar@{->}[d] & 3^{2k-2} \ar@{->}[d] & 5^{2k-2} \ar@{->}[d] & & \ldots & p^{2k-2} \ar@{->}[d]&\ldots \\
 &   & \vdots \ar@{->}[d] & \vdots\ar@{->}[d] & \vdots\ar@{->}[d] & & \ldots & \vdots\ar@{->}[d] & \vdots \\
 &   & 4 \ar@{->}[d]  & 9 \ar@{->}[d] & 25 \ar@{->}[d] & & \ldots & p^2 \ar@{->}[d] &\ldots\\
0 \ar@(dl,dr)[]& 1\ar@(dl,dr)[] & 2\ar@{->}[d] & 3\ar@{->}[d] & 5\ar@{->}[d] & 6\ar@(dl,dr)[] &\ldots \ar@(dl,dr)[] & p\ar@{->}[d] &\ldots\ar@(dl,dr)[]\\
 &   & 8 \ar@{->}[d] & 27 \ar@{->}[d] & 125 \ar@{->}[d] & & \ldots & p^{3} \ar@{->}[d]&\ldots \\
 &   & \vdots\ar@{->}[d] & \vdots\ar@{->}[d] & \vdots \ar@{->}[d] & &\ldots & \vdots\ar@{->}[d] & \ldots\\
 &   &  2^{2k-1} \ar@{->}[d] & 3^{2k-1} \ar@{->}[d] & 5^{2k-1} \ar@{->}[d] & & \ldots & p^{2k-1} \ar@{->}[d]& \ldots\\
 &   &  2^{2k+1} \ar@{->}[d] & 3^{2k+1} \ar@{->}[d] & 5^{2k+1} \ar@{->}[d] & & \ldots & p^{2k+1} \ar@{->}[d]& \ldots\\
 & & \vdots & \vdots & \vdots & &\ldots & \vdots & \ldots\\
}
\end{equation*}
In this case $s_{\mu_1\circ\mu_2}=\alpha_{\mu_1\circ\mu_2}=\omega$ (so $|s_{\mu_1\circ\mu_2}|^*=+\infty$) and by Theorem \ref{theorem6} $\ent(\sigma_{\mu_1\circ\mu_2})=+\infty$.
Similarly one can see that also s$_{\mu_2 \circ \mu_1}=\alpha_{\mu_1\circ\mu_2}=\omega$ and so that $\ent(\sigma_{\mu_2\circ\mu_1})=+\infty$.

By Proposition \ref{composition}(a) $\sigma_{\mu_1\circ\mu_2}=\sigma_{\mu_1}\circ\sigma_{\mu_2}$; hence $\sigma_{\mu_1}\circ\sigma_{\mu_2}$ is an example of an endomorphism of infinite entropy with both $\sigma_{\mu_1}$ and $\sigma_{\mu_2}$ of entropy $0$. The same is $\sigma_{\mu_2}\circ\sigma_{\mu_1}$.

\bigskip

Let $\varrho_1:\Gamma\to\Gamma$ be defined by $\varrho_1(0)=1$ and $\varrho_1(m)=m$ for every $m\in\N$. Then $s_{\varrho_1}=0$ and so $\ent(\sigma_{\varrho_1})=0$ by Theorem \ref{theorem6}.

\medskip
The diagram for $\varrho_1\circ\varphi_1:\Gamma\to\Gamma$ is the following:
\begin{equation*}
\xymatrix@-1pc{
  & \vdots\ar@{->}[d]\\  
  & 3\ar@{->}[d]  \\
0 \ar@{->}[dr] & 2 \ar@{->}[d]\\
 & 1\ar@(dl,dr)[]
}
\bigskip
\end{equation*}

\bigskip
For this function $s_{\varrho_1\circ\varphi_1}=1$, and so by Theorem \ref{theorem6} $\ent(\sigma_{\varrho_1\circ\varphi_1})=\log|K|$.

By Proposition \ref{composition}(a) $\sigma_{\varrho_1\circ\varphi_1}=\sigma_{\varrho_1}\circ\sigma_{\varphi_1}$; consequently $\ent(\sigma_{\varrho_1}\circ\sigma_{\varphi_1})=\ent(\sigma_{\varphi_1})=\log|K|$, while $\ent(\sigma_{\varrho_1})=0$.
\end{example}

\begin{theorem}\label{theorem14}
If $\lambda:\Gamma\to\Gamma$, $\mu:\Upsilon\to\Upsilon$ are such that for each
$(i,j)\in\Gamma\times\Upsilon$, $\lambda^{-1}(i)$ and
$\mu^{-1}(j)$ are finite, then for the endomorphism $\sigma_{\lambda\times\mu}:G_{\Gamma\times\Upsilon}\to G_{\Gamma\times\Upsilon}$ we have:
\begin{equation*}
\ent(\sigma_{\lambda\times\mu})=0\ \text{\emph{if}}\  \begin{cases}
\ent(\sigma_{\lambda})=\ent(\sigma_{\mu})=0, & \text{\emph{(a$_0$)}}\\
\ent(\sigma_\mu)=|\Per(\mu)|=0\ \text{and}\ \ent(\sigma_\lambda)>0, & \text{\emph{(a$_1$)}}\\
\ent(\sigma_\lambda)=|\Per(\lambda)|=0\ \text{and}\ \ent(\sigma_\mu)>0. &  \text{\emph{(a$_2$)}}
\end{cases}\eqno{(a)}
\end{equation*}
Moreover,
\begin{equation*}
\ent(\sigma_{\lambda\times\mu})=
\begin{cases}
+\infty & \text{\emph{if}}\ \ent(\sigma_{\lambda})>0\ \text{and} \ \ent(\sigma_{\mu})>0,\ \ \ \ \ \ \ \ \ \ \ \ \ \ \ \ \ \ \ \text{\emph{(b$_0$)}}\\
|\Per(\mu)|^*\ent(\sigma_\lambda) & \text{\emph{if}}\ \ent(\sigma_\mu)=0,\ |\Per(\mu)|>0\ \text{and}\ \ent(\sigma_\lambda)>0,\ \ \ \text{\emph{(b$_1$)}}\\
|\Per(\lambda)|^*\ent(\sigma_\mu) & \text{\emph{if}}\ \ent(\sigma_\lambda)=0,\ |\Per(\lambda)|>0\ \text{and}\ \ent(\sigma_\mu)>0.\ \ \ \text{\emph{(b$_2$)}}
\end{cases}\eqno{(b)}
\end{equation*}
\end{theorem}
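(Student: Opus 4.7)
The approach is to apply Theorem \ref{theorem6} and Corollary \ref{corollary}(a) to the map $\lambda\times\mu:\Gamma\times\Upsilon\to\Gamma\times\Upsilon$, which has finite fibers $(\lambda\times\mu)^{-1}(i,j)=\lambda^{-1}(i)\times\mu^{-1}(j)$. A preliminary step is to observe that $(\Gamma\times\Upsilon)^+=\Gamma^+\times\Upsilon^+$, and that whenever $s_\mu=0$ the finite-fibers hypothesis together with K\"onig's lemma forces $\Upsilon^+=\Per(\mu)$: a non-periodic point of $\Upsilon^+$ lies in $\mu^n(\Upsilon)$ for every $n$, so its tree of $\mu$-ancestors is infinite and finitely branching, hence contains an infinite branch of distinct elements, i.e., a string of $\mu$. (Symmetric statement for $s_\lambda=0$.)

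This preliminary handles cases (a) and (b$_0$). In (a$_0$), $(\Gamma\times\Upsilon)^+=\Per(\lambda)\times\Per(\mu)$, on which $\lambda\times\mu$ is a bijection with orbits being cycles of period $\mathrm{lcm}(p_m,p_n)$; any purported string would have to cycle, contradicting distinctness of entries, so $s_{\lambda\times\mu}=0$ and $\ent(\sigma_{\lambda\times\mu})=0$ by Theorem \ref{theorem6}. In (a$_1$) (symmetrically (a$_2$)), $|\Per(\mu)|=0$ together with $s_\mu=0$ yields $\Upsilon^+=\emptyset$, hence $(\Gamma\times\Upsilon)^+=\emptyset$ and $\ent(\sigma_{\lambda\times\mu})=0$. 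For (b$_0$), starting from strings $\{m_t\}$ of $\lambda$ and $\{n_t\}$ of $\mu$, I set $S^k:=\{(m_{t-k},n_t)\}_{-t\in\N_0}$ for $k\in\N_0$; equal $n$-entries force equal indices, and then equal $m$-entries force $k=k'$, so the $S^k$ are pairwise disjoint strings, giving $s_{\lambda\times\mu}=+\infty$ and $\ent(\sigma_{\lambda\times\mu})=+\infty$.

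The main work is (b$_1$), with (b$_2$) symmetric. The reduction yields $\ent(\sigma_{\lambda\times\mu})=\ent(\sigma_{(\lambda\times\mu)\restriction_{\Gamma^+\times\Per(\mu)}})$. Decomposing $\Per(\mu)=\bigsqcup_{i\in I}C_i$ into $\mu$-cycles of period $p_i$ makes each $\Gamma^+\times C_i$ a $(\lambda\times\mu)$-invariant subset, so by Remark \ref{lemma1}(c), $\ent(\sigma_{\lambda\times\mu})=\sum_{i\in I}\ent(\sigma_{\tau_i})$, where $\tau_i:=(\lambda\times\mu)\restriction_{\Gamma^+\times C_i}$. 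I then apply the ``cycle trick'': $\tau_i^{p_i}=\lambda^{p_i}\times\mathrm{id}_{C_i}$, and under the $\sigma_{\tau_i^{p_i}}$-invariant decomposition $G_{\Gamma^+\times C_i}=\bigoplus_{n\in C_i}G_{\Gamma^+\times\{n\}}$, $\sigma_{\tau_i^{p_i}}$ acts on each of the $p_i$ summands as a copy of $\sigma_{\lambda^{p_i}}$ on $G_{\Gamma^+}$. By Remark \ref{lemma1}(a), Proposition \ref{composition}(a) and Lemma \ref{log_law},
\begin{equation*}
p_i\,\ent(\sigma_{\tau_i})=\ent(\sigma_{\tau_i^{p_i}})=p_i\,\ent(\sigma_{\lambda^{p_i}})=p_i^2\,\ent(\sigma_\lambda),
\end{equation*}
yielding $\ent(\sigma_{\tau_i})=p_i\,\ent(\sigma_\lambda)$ (both sides being $+\infty$ when $\ent(\sigma_\lambda)=+\infty$). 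Summing, $\ent(\sigma_{\lambda\times\mu})=\sum_i p_i\,\ent(\sigma_\lambda)=|\Per(\mu)|^*\ent(\sigma_\lambda)$, as required. The main obstacle I anticipate is justifying the K\"onig-style identification $\Upsilon^+=\Per(\mu)$ under $s_\mu=0$, and the division step $p_i\,\ent(\sigma_{\tau_i})=p_i^2\,\ent(\sigma_\lambda)\Rightarrow\ent(\sigma_{\tau_i})=p_i\,\ent(\sigma_\lambda)$ in the infinite-entropy regime.
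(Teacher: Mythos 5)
Your proposal is correct, and for the main cases (b$_1$)/(b$_2$) it takes a genuinely different route from the paper. The paper's proof is purely combinatorial at the level of strings: it argues that, when $s_\mu=0$, a string of $\lambda\times\mu$ must project to a string of $\lambda$ in the first coordinate together with a periodic point of $\mu$ in the second, proves the two inequalities $s_{\lambda\times\mu}\leq|\Per(\mu)|\,s_\lambda$ and $s_{\lambda\times\mu}\geq|\Per(\mu)|\,s_\lambda$ by exhibiting and separating strings of the product map, and then applies Theorem \ref{theorem6} once to $\lambda\times\mu$. You never count strings of the product in (b$_1$): you reduce to $(\Gamma\times\Upsilon)^+=\Gamma^+\times\Per(\mu)$ via Corollary \ref{corollary}(a) and the K\"onig-lemma identification $\Upsilon^+=\Per(\mu)$ under $s_\mu=0$ (which is sound with finite fibers, and is the same phenomenon the paper exploits in Corollary \ref{corollary}(c)), then decompose along the $\mu$-cycles with Remark \ref{lemma1}(c) and extract the factor $p_i$ per cycle from Lemma \ref{log_law} and Lemma \ref{conjugation_by_iso}. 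What your route buys is that it sidesteps the somewhat delicate projection claims in the paper (that the first coordinate of a product string is itself pairwise distinct, and that disjoint product strings yield either disjoint $\lambda$-strings or distinct periodic points); what it costs is heavier machinery where the paper needs only Theorem \ref{theorem6}. Both of the obstacles you flag are harmless: a repetition in a backward branch forces periodicity of the root, so a non-periodic point of $\Upsilon^+$ does yield a branch of pairwise distinct elements, i.e., a string; and in $p_i\,\ent(\sigma_{\tau_i})=p_i^{2}\,\ent(\sigma_\lambda)$ either both sides are finite or both are $+\infty$, and $p_i$ is a positive integer, so the division is legitimate. Parts (a$_0$), (a$_1$), (a$_2$) and (b$_0$) are essentially the paper's arguments rephrased through $(\Gamma\times\Upsilon)^+$, with your $S^k$ construction in (b$_0$) matching the paper's shifted strings (and in fact fixing a minor indexing slip there).
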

\begin{proof}
If $\ent(\sigma_{\lambda\times\mu})>0$, then by Theorem \ref{theorem6}, there exists a string
$\{(m_t,n_t)\}_{-t\in\N_0}$ of $\lambda\times\mu$ (for each $-t\in\N_0$, $(m_{t+1},n_{t+1})=(\lambda(m_t),\mu(n_t))$),
therefore at least one of the sequences $\{m_t\}_{-t\in\N_0}$ or $\{n_t\}_{-t\in\N_0}$ is a string, which shows that either
$\ent(\sigma_\lambda)>0$ or $\ent(\sigma_\mu)>0$, in view of Theorem \ref{theorem6}. This proves (a$_0$).

\smallskip
Assume that $\ent(\sigma_\mu)=0$ and $\ent(\sigma_\lambda)>0$. We prove that $\ent(\sigma_{\lambda\times\mu})=0$ if $|\Per(\mu)|=0$. To this end, suppose that $\ent(\sigma_{\lambda\times\mu})>0$. By Theorem \ref{theorem6} $s_{\lambda\times\mu}>0$, so let $\{(m_t,n_t)\}_{-t\in\N_0}$ be a string of $\lambda\times\mu$. Since $\ent(\sigma_\mu)=0$, $s_\mu=0$ by Theorem \ref{theorem6}, and so $\{m_t\}_{-t\in\N_0}$ has to be a string  of $\lambda$ and $n_0$ is a periodic point of $\mu$. In particular $|\Per(\mu)|>0$. This proves (a$_1$). 

Reverting the roles of $\lambda$ and $\mu$ one can prove (a$_2$).

\smallskip
Now let $\ent(\sigma_\lambda)>0$ and $\ent(\sigma_\mu)>0$. By Theorem \ref{theorem6} there exist strings $\{m_t\}_{-t\in\N_0}$ and $\{n_t\}_{-t\in\N_0}$ respectively of $\lambda$ and $\mu$. For each $-l\in\N_0$, let $z_{l}:=(m_0,n_l)$. Then $\{(m_t,n_{l+t})\}_{-t\in\N_0}$ is a string of $\lambda\times\mu$ for every $l\in\N$, and these strings are pairwise disjoint. This means that $|s_{\lambda\times\mu}|^*=+\infty$ and by Theorem \ref{theorem6} $\ent(\sigma_{\lambda\times\mu})=+\infty$. This proves (b$_0$).

\smallskip
Assume that $\ent(\sigma_\mu)=0$, $|\Per(\mu)|>0$ and $\ent(\sigma_\lambda)>0$. We prove that
\begin{equation}\label{sigmaXmu}
\ent(\sigma_{\lambda\times\mu})\leq |\Per(\mu)|^* \ent(\sigma_\lambda). 
\end{equation}
If $\ent(\sigma_{\lambda\times\mu})=0$ the inequality in \eqref{sigmaXmu} is trivially satisfied. So we can assume that $\ent(\sigma_{\lambda\times\mu})>0$. By Theorem \ref{theorem6} $s_{\lambda\times\mu}>0$. Let $\{(m_t,n_t)\}_{-t\in\N_0}$ be a string  of $\lambda\times\mu$. Since $s_\mu=0$ by Theorem \ref{theorem6}, $\{m_t\}_{-t\in\N_0}$ has to be a string  of $\lambda$ and each $n_t$ is a periodic point of $\mu$. If $\{(m_t,n_t)\}_{-t\in\N_0}$ and $\{(m'_t,n'_t)\}_{-t\in\N_0}$ are disjoint strings of $\lambda\times\mu$, then either $\{m_t\}_{-t\in\N_0}$ and $\{m'_t\}_{-t\in\N_0}$ are disjoint strings of $\lambda$ or $n_0$ and $n'_0$ are distinct periodic points of $\mu$. This proves that $s_{\lambda\times\mu}\leq |\Per(\mu)| s_\lambda$. In particular \eqref{sigmaXmu} holds by Theorem \ref{theorem6}.

We show now that under the same hypotheses, also the converse implication holds true, that is, we prove that
\begin{equation}\label{sigmaXmu2}
\ent(\sigma_{\lambda\times\mu})\geq|\Per(\mu)|^*\ent(\sigma_\lambda).
\end{equation}
If $\{m_t\}_{-t\in\N_0}$ is a string of $\lambda$, and $j\in\Per(\mu)$ with $\{j=j_0,j_1,j_2,\ldots, j_s\}$ the finite orbit of $j$ (i.e., $\mu(j_k)=j_{k+1}$ for every $k\in\{0,\ldots,s-1\}$ and $\mu(j_s)=j$), then $\{(m_t,j_{[t]_{s+1}})\}_{-t\in\N_0}$ is a string of $\lambda\times\mu$ (where, for $a\in\Z,\ b\in\N$, $[a]_{b}$ denotes the remainder class of $a$ modulo $b$). In case $i,j$ are distinct elements of $\Per(\mu)$, the strings $\{(m_t,j_{[t]_{s+1}})\}_{-t\in\N_0}$ and $\{(m_t,i_{[t]_{r+1}})\}_{-t\in\N_0}$ (where $\{i=i_0,i_1,i_2,\ldots, i_r\}$ is the finite orbit of $i$, i.e., $\mu(i_k)=i_{k+1}$ for every $k\in\{0,\ldots,r-1\}$ and $\mu(i_r)=i$) are pairwise disjoint. This proves that $s_{\lambda\times\mu}\geq|\Per(\mu)|s_\lambda$, and by Theorem \ref{theorem6} \eqref{sigmaXmu2} holds.

By \eqref{sigmaXmu} and \eqref{sigmaXmu2} $\ent(\sigma_{\lambda\times\mu})=|\Per(\mu)|^*\ent(\sigma_\lambda)$. This concludes the proof of (b$_1$).

Reverting the roles of $\lambda$ and $\mu$ one can prove (b$_2$).
\end{proof}

In the notations of this theorem, the following example shows that in the case where $\ent(\sigma_\mu)=0$ and $\ent(\sigma_\lambda)=\log|K|$, it is possible that $\ent(\sigma_{\lambda\times \mu})$ is positive and also infinite, depending on the cardinality of $\Per(\mu)$.

\begin{example}\label{example15}
Let $t\in\N$, $\Gamma_t=\{1,\ldots,t\}$ and $\theta_t=(123\dots t)\in S_{\Gamma_t}$. Let $\Lambda=\N_0\times\Gamma_t$. Then $\varphi_1\times\theta_t:\Lambda\to\Lambda$ and its diagram is the following:
\begin{equation*}
\xymatrix@!R@!C@-1pc{
\vdots \ar@{->}[d] & \vdots \ar@{->}[d] & \vdots \ar@{->}[d] & \vdots \ar@{->}[d] & \vdots \ar@{->}[d] \\
(m,1) \ar@{->}[d] & (m,2) \ar@{->}[d] & \ldots \ar@{->}[d] & (m,t-1) \ar@{->}[d] & (m,t) \ar@{->}[d] \\
(m-1,2) \ar@{->}[d] & (m-1,3) \ar@{->}[d] & \ldots \ar@{->}[d] & (m-1,t) \ar@{->}[d] & (m-1,1) \ar@{->}[d] \\
\vdots \ar@{->}[d] & \vdots \ar@{->}[d] & \vdots \ar@{->}[d] & \vdots \ar@{->}[d] & \vdots \ar@{->}[d] \\
(m-t,1) \ar@{->}[d] & (m-t,2) \ar@{->}[d] & \ldots \ar@{->}[d] & (m-t,t-1) \ar@{->}[d] & (m-t,t) \ar@{->}[d] \\
(m-t-1,2)\ar@{->}[d]&(m-t-1,3)\ar@{->}[d]&\ldots\ar@{->}[d]&(m-t-1,t)\ar@{->}[d]&(m-t-1,1)\ar@{->}[d] \\
\vdots\ar@{->}[d] & \vdots\ar@{->}[d] & \vdots\ar@{->}[d] & \vdots\ar@{->}[d] & \vdots\ar@{->}[d] \\
(2,t-1) \ar@{->}[d] & (2,t) \ar@{->}[d] & \ldots \ar@{->}[d] & (2,t-3)\ar@{->}[d] & (2,t-2) \ar@{->}[d] \\
(1,t) \ar@{->}[d] & (1,1) \ar@{->}[d] & \ldots \ar@{->}[d] & (1,t-2) \ar@{->}[d] & (1,t-1) \ar@{->}[d] \\
(0,1) \ar@{->}[r] & (0,2) \ar@{->}[r] & \ldots \ar@{->}[r] & (0, t-1)\ar@{->}[r] & (0,t) \ar@/^1.5pc/@{->}[llll] \\
}
\end{equation*}

\bigskip
Let $\theta=(12)\in S_\N$ and let $\Lambda=\N\times \N_0$. The diagram for $\theta\times \varphi_1:\Lambda\to\Lambda$ is the following:
\begin{equation*}
\xymatrix@!R@!C@-1pc{
\vdots \ar@{->}[d] & \vdots \ar@{->}[d] & \vdots \ar@{->}[d] & \ldots \ar@{->}[d] & \vdots \ar@{->}[d] & \vdots \ar@{->}[d] \\
(1,2) \ar@{->}[d] & (2,2) \ar@{->}[d] & (3,2) \ar@{->}[d] & \ldots \ar@{->}[d] & (n,2) \ar@{->}[d] & \ldots \ar@{->}[d] \\
(2,1) \ar@{->}[d] & (1,1) \ar@{->}[d] & (3,1) \ar@{->}[d] & \ldots \ar@{->}[d] & (n,1) \ar@{->}[d] & \ldots\ar@{->}[d] \\
(1,0) \ar@{->}[r] & (2,0) \ar@{->}@/^1.5pc/[l] & (3,0)\ar@(dl,dr)[] & \ldots\ar@(dl,dr)[] & (n,0)\ar@(dl,dr)[] & \ldots \ar@(dl,dr)[]\\
}
\bigskip
\end{equation*}

Then by Theorem \ref{theorem6} $\ent(\sigma_{\varphi_1\times\theta_t})=t\log|K|$ and $\ent(\sigma_{\varphi_1\times\theta})=+\infty$, while $\ent(\sigma_{\varphi_1})=\log|K|$ and $\ent(\sigma_{\theta})=\ent(\sigma_{\theta_t})=0$, since $|\Per(\theta_t)|=t$, $|\Per(\theta)|^*=+\infty$. 
\end{example}

The next is an application of Theorem \ref{theorem14} and Corollary \ref{cor7}. Indeed considering the product of two finite abelian groups $K\times L$ instead of only one finite abelian group $K$ (as in Theorem \ref{theorem14}) is not a more general situation, since Theorem \ref{theorem6}, but also Corollary \ref{cor7}, shows that the entropy of a generalized shift depends mainly on its string number $s_\lambda$.

\begin{corollary}\label{cor16}
Let $K$ and $L$ be finite non-trivial abelian groups and let $\lambda:\Gamma\rightarrow\Gamma$ and
$\mu:\Upsilon\rightarrow\Upsilon$ be such that for each $(i,j)\in\Gamma\times\Upsilon$, $\lambda^{-1}(i)$ and
$\mu^{-1}(j)$ are finite. Then for $\sigma_{\lambda\times\mu}:(K\times L)^{(\Gamma\times\Upsilon)}\to(K\times L)^{(\Gamma\times\Upsilon)}$ we have:
\begin{equation*}
\ent(\sigma_{\lambda\times\mu})=0\ \text{\emph{if}}\ \begin{cases}
\ent(\sigma_{\lambda})=\ent(\sigma_{\mu})=0, & \text{\emph{(a$_0$)}}\\
\ent(\sigma_\mu)=|\Per(\mu)|=0\ \text{and}\ \ent(\sigma_\lambda)>0, & \text{\emph{(a$_1$)}}\\
\ent(\sigma_\lambda)=|\Per(\lambda)|=0\ \text{and}\ \ent(\sigma_\mu)>0. & \text{\emph{(a$_2$)}}
\end{cases}
\eqno{(a)}\end{equation*}
Moreover,
\begin{equation*}
\ent(\sigma_{\lambda\times\mu})=
\begin{cases}
+\infty & \text{\emph{if}}\ \ent(\sigma_{\lambda})>0\ \text{and} \ \ent(\sigma_{\mu})>0,\ \ \ \ \ \ \ \ \ \ \ \ \ \ \ \ \ \ \ \text{\emph{(b$_0$)}}\\
\frac{\log|K\times L|}{\log|K|}|\Per(\mu)|^*\ent(\sigma_\lambda) & \text{\emph{if}}\ \ent(\sigma_\mu)=0,\ |\Per(\mu)|>0\ \text{and}\ \ent(\sigma_\lambda)>0,\ \ \ \text{\emph{(b$_1$)}}\\
\frac{\log|K\times L|}{\log|K|}|\Per(\lambda)|^*\ent(\sigma_\mu) & \text{\emph{if}}\ \ent(\sigma_\lambda)=0,\ |\Per(\lambda)|>0\ \text{and}\ \ent(\sigma_\mu)>0.\ \ \ \text{\emph{(b$_2$)}}
\end{cases}\eqno{(b)}
\end{equation*}
\end{corollary}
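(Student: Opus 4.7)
The plan is to reduce the statement to Theorem \ref{theorem14} via the coefficient-group scaling identity of Corollary \ref{cor8}(b). Since $K$ and $K\times L$ are both finite non-trivial abelian, and $\lambda\times\mu$ has finite fibers (each fiber is a product of a fiber of $\lambda$ and a fiber of $\mu$), Corollary \ref{cor8}(b) applied to the map $\lambda\times\mu$ with the two coefficient groups $K$ and $K\times L$ yields the scaling identity
\begin{equation*}
\ent(\sigma_{\lambda\times\mu,K\times L}) \;=\; \frac{\log|K\times L|}{\log|K|}\,\ent(\sigma_{\lambda\times\mu,K}).
\end{equation*}
Applied separately to $\lambda$ alone and to $\mu$ alone, the same identity shows that the conditions $\ent(\sigma_\lambda)=0$, $\ent(\sigma_\lambda)>0$, $\ent(\sigma_\mu)=0$, $\ent(\sigma_\mu)>0$ are insensitive to whether one works over $K$ or over $K\times L$, so these hypotheses can be interpreted unambiguously.

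For part (a), since the scalar $\frac{\log|K\times L|}{\log|K|}$ is strictly positive, the displayed identity gives $\ent(\sigma_{\lambda\times\mu,K\times L})=0$ if and only if $\ent(\sigma_{\lambda\times\mu,K})=0$. Theorem \ref{theorem14}(a) then transfers verbatim, giving cases (a$_0$), (a$_1$), (a$_2$).

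For part (b), inserting the formulas of Theorem \ref{theorem14}(b) into the displayed identity gives: in case (b$_0$), $+\infty$ multiplied by a positive scalar is again $+\infty$; in case (b$_1$), the value $|\Per(\mu)|^*\,\ent(\sigma_{\lambda,K})$ gets multiplied by $\frac{\log|K\times L|}{\log|K|}$, which matches the stated expression (reading $\ent(\sigma_\lambda)$ as $\ent(\sigma_{\lambda,K})$); and (b$_2$) is symmetric. The only hypothesis-checking required is to confirm that Corollary \ref{cor8}(b) genuinely applies, i.e., that $\lambda\times\mu$ has finite fibers and that both $K$ and $K\times L$ are non-trivial—both immediate. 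There is no real obstacle: the corollary is essentially a one-line consequence of Theorem \ref{theorem14} combined with the factorization $\ent(\sigma_\lambda)=|s_\lambda|^*\log|K|$ of Theorem \ref{theorem6}, which isolates the combinatorial ingredient $|s_{\lambda\times\mu}|^*$ from the coefficient group.
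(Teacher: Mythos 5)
Your proposal is correct and follows exactly the paper's own argument: apply the coefficient-group scaling identity of Corollary \ref{cor8}(b) to $\sigma_{\lambda\times\mu}$ to reduce to the group $K$, then invoke Theorem \ref{theorem14}. Your additional observation that the hypotheses $\ent(\sigma_\lambda)=0$, $\ent(\sigma_\lambda)>0$, etc.\ are insensitive to the choice of coefficient group is a worthwhile clarification the paper leaves implicit, but it does not change the route.
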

\begin{proof}
By Corollary \ref{cor7} $\ent(\sigma_{\lambda\times\mu,K\times L})=\frac{\log|K\times L|}{\log|K|}\ent(\sigma_{\lambda\times\mu,K})$. Now apply Theorem \ref{theorem14}.
\end{proof}

In the next example we associate to a given map $\lambda$ a natural extension map $\Lambda$ such that $\ent(\sigma_\Lambda)$ is either infinite or $0$, depending on whether $\ent(\sigma_\lambda)$ is positive or $0$.

\begin{example}\label{example17}
Define $\Lambda:\mathcal P_{fin}(\Gamma)\rightarrow{\mathcal P}_{fin}(\Gamma)$ by $\Lambda(A)=\{\lambda(i):i\in A\}=\lambda(A)$ for every $A\in {\mathcal P}_{fin}(\Gamma)$.
(Since $\Gamma$ embeds into ${\mathcal P}_{fin}(\Gamma)$ in a natural way via the singletons, $\Lambda$ can be considered as an extension of $\lambda$.) 
For each $A\in\mathcal P_{fin}(\Gamma)$, $\Lambda^{-1}(A)$ is a finite subset of ${\mathcal P}_{fin}(\Gamma)$. Consider $\sigma_\Lambda:G_{\mathcal P_{fin}(\Gamma)}\to G_{\mathcal P_{fin}(\Gamma)}$. Then:
\begin{equation*}
\ent(\sigma_{\Lambda})=
\begin{cases}
    0 & \text{if}\ \ent(\sigma_\lambda)=0, \\
    +\infty & \text{if}\ \ent(\sigma_\lambda)>0.
\end{cases}
\end{equation*}
If $\ent(\sigma_\Lambda)>0$, by Theorem \ref{theorem6} there exists at least one string $\{A_t\}_{-t\in\N_0}$ of $\Lambda$ in $\mathcal P_{fin}(\Gamma)$. In particular there exists a string $\{m_t\}_{-t\in\N_0}$ of $\lambda$ in $\Gamma$: indeed, there exists $-t\in\N_0$ such that not all elements of $A_t$ are periodic for $\lambda$. Suppose without loss of generality that $t=0$ and let $m_{0}\in A_{0}\setminus \Per(\lambda)$. Then there exists an infinite sequence $\{m_t\}_{-t\in\N_0}$ of elements of $\Gamma$ such that $m_t\in A_t$ and $\lambda(m_t)=m_{t+1}$ for every $-t\in\N$. The elements $m_t$ have to be distinct because $m_{0}$ is not periodic. So $\{m_t\}_{-t\in\N_0}$ is a string of $\lambda$ and by Theorem \ref{theorem6} $\ent(\sigma_\lambda)>0$. This proves that if $\ent(\sigma_\lambda)=0$ then $\ent(\sigma_\Lambda)=0$.

If $\ent(\sigma_\lambda)>0$, by Theorem \ref{theorem6} there exists a string $\{m_t\}_{-t\in\N_0}$ of $\lambda$ in $\Gamma$. For each $l\in\N_0$,
$S_l:=\{\{m_t,m_{t-1},\ldots,m_{t-l}\}\}_{-t\in\N_0}$ is a string of $\Lambda$ in $\mathcal P_{fin}(\Gamma)$ and obviously $S_1,\ldots,S_l,\ldots$ are pairwise disjoint strings of $\Lambda$ in $\mathcal P_{fin}(\Gamma)$. So Theorem \ref{theorem6} leads us to the desired result, that is $\ent(\sigma_\Lambda)=+\infty$.
\end{example}

\section{Final remarks and open problems}

We consider here the generalized shift $\sigma_\lambda$  on $K^\Gamma$ and, in case $\lambda$ has finite fibers,  its restriction $\sigma_{\lambda}^\oplus$ on $K^{(\Gamma)}$.
Theorem \ref{theorem6} calculates precisely the value of the entropy of $\sigma_{\lambda}^\oplus:K^{(\Gamma)}\to K^{(\Gamma)}$. The necessary property on $\lambda$ to have finite fibers helps us in finding the explicit formula for the entropy of $\sigma_{\lambda}^\oplus$. In the general case of $\sigma_\lambda:K^\Gamma\to K^\Gamma$ we 
leave open the following problem.

\begin{problem}
Calculate the entropy of $\sigma_\lambda:K^\Gamma\to K^\Gamma$. Is $\ent(\sigma_\lambda)=\ent(\sigma_{\lambda}^\oplus)$ in case $\lambda$ has finite fibers?
\end{problem}

Note that $\ent(\sigma_\lambda)\geq\ent(\sigma_{\lambda}^\oplus)$ in the latter case, since then $K^{(\Gamma)}$ is $\sigma_\lambda$-invariant in $K^\Gamma$.

\begin{problem}
\begin{itemize}
	\item[(a)] Is it possible to have $\ent(\sigma_{\lambda}^\oplus)=0$, but $\ent(\sigma_\lambda)>0$?
	\item[(b)] Is it possible to have $0<\ent(\sigma_\lambda)<+\infty$?
\end{itemize}
\end{problem}

Moreover, Theorem \ref{theorem6} concerns only a single abelian group $K$. If $\{K_i:i\in\Gamma\}$ is a family of abelian groups and $\rho_i:K_{\lambda(i)}\to K_i$ is a homomorphism for every $i\in I$, define $\widetilde\sigma_\lambda:\prod_{i\in\Gamma}K_i\to\prod_{i\in\Gamma}K_i$ by $\widetilde\sigma_\lambda(x)=(\rho_i(x_{\lambda(i)}))_{i\in\Gamma}$ for every $x=(x_i)_{i\in\Gamma}\in\prod_{i\in\Gamma}K_i$. It is possible to consider also $\widetilde\sigma_\lambda$ restricted to the direct sum, that is, $\widetilde\sigma_{\lambda}^\oplus:\bigoplus_{i\in\Gamma}K_i\to\bigoplus_{i\in\Gamma}K_i$.

\begin{problem}
Suppose that for every $i\in\Gamma$ $K_i$ is a finite abelian group.
\begin{itemize}
	\item[(a)]Calculate the entropy of $\widetilde\sigma_\lambda:\prod_{i\in\Gamma}K_i\to\prod_{i\in\Gamma}K_i$.
	\item[(b)]Calculate the entropy of $\widetilde\sigma_{\lambda}^\oplus:\bigoplus_{i\in\Gamma}K_i\to\bigoplus_{i\in\Gamma}K_i$.
\end{itemize}
\end{problem}

A particular case of this problem is when for each $i\in\Gamma$ $K_{\lambda(i)}\leq K_i$, that is, $\rho_i:K_{\lambda(i)}\to K_i$ is an injective homomorphism. So one can consider first the problem in this case.

\medskip
Let $K$ be a finite field and $R=K[x]$. For $r\in R$, let $m_r:R\to R$ be defined by $m_r(s)=r s$ for every $s\in R$. It is easy to see that for the 
natural isomorphism $j:K[x]\to K^{(\N_0)}$ the conjugated isomorphism $j \circ m_x \circ  j^{-1}$ coincides with the right Bernoulli shift $\beta_K$ of $K^{(\N_0)}$ (and consequently, 
$j \circ m_{x^n} \circ  j^{-1}=\beta^n_K$).
 Therefore, the endomorphism $m_r$ is (conjugated to) a linear combination of powers of the Bernoulli shift  $\beta_K$.

\begin{problem}
Calculate the entropy of $m_r:K[x]\to K[x]$. What about the ring $K[x_1,\ldots,x_n]$ of polynomials in more variables? 
\end{problem}

This problem can be generalized for graded rings. (Let us recall that a \emph{graded ring} is a ring $R$ with a family $\{R_i:i\in\N_0\}$ of subgroups of $(R,+)$ such that $R=\bigoplus_{i=0}^\infty R_i$ and $R_i R_j\subseteq R_{i+j}$ for all $i,j\in \N_0$ \cite[Chapter 10]{AM}.)
For $r\in R$, let $m_r:R\to R$ be defined by $m_r(s)=r s$ for every $s\in R$. 

\begin{problem}\label{Last_problem}
Compute the entropy $\ent(m_r)$ in case $R$ is a graded ring, $r\in R$ and $m_r:R\to R$.
\end{problem}

Problem \ref{Last_problem} can be extended also to graded $R$-modules $M$ and the endomorphism $m_r$ of $M$ defined by the multiplication, in $M$,  by  a fixed element $r\in R$ as above.   

\medskip
We conclude with a problem suggested by Example \ref{example10}(b).

\begin{problem}
Let $\Gamma$ be an abelian group and $\lambda:\Gamma\to\Gamma$ an endomorphism.
\begin{itemize}
\item[(a)] Is it true that $s_\lambda>0$ implies $s_\lambda$ infinite?
\item[(b)] Describe in which cases $s_\lambda=0$ and in which cases $s_\lambda$ is infinite.
\end{itemize}
\end{problem}


\end{document}